\newtheorem{thm}{Theorem}[section]
\newtheorem{lem}[thm]{Lemma}
\newtheorem{conj}[thm]{Conjecture}
\theoremstyle{definition}
\newtheorem{defn}[thm]{Definition}
\newtheorem{rem}[thm]{Remark}
\newtheorem{exa}[thm]{Example}
\numberwithin{equation}{section}
\begin{document}

\title[]{Upper bound of discrepancies of divisors computing minimal log discrepancies on surfaces}%
\author{Bingyi Chen}%
\address{Bingyi Chen, Yau Mathematical Sciences Center,
Tsinghua University,
Beijing, 100084, P. R. China.}
\email{bychen@mail.tsinghua.edu.cn}

\thanks{}%
\subjclass{}%
\keywords{}%

\begin{abstract}
Fix a subset $I\subseteq \mathbb R_{>0}$ such that 
$$\gamma:=\inf\{ \sum_{i}n_ib_i-1>0 \mid n_i\in \mathbb Z_{\geq 0}, b_i\in I \}>0.$$ We give an explicit upper bound $\ell(\gamma)\in O(1/\gamma^2)$ as $\gamma\to 0$, such that for any smooth surface $A$ of arbitrary characteristic with a closed point 0 and an $\mathbb R$-ideal $\mathfrak{a}$ with exponents in $I$, there always exists a prime divisor $E$ over $A$ computing the minimal log discrepancy of $(A,\mathfrak{a})$ at 0 and with its log discrepancy  $k_E+1\leq \ell(\gamma)$. Some examples indicate that our bound is optimal.

\end{abstract}
\maketitle
\section{Introduction}
Let $A$ be a smooth variety over an algebraically closed field $k$ and $0\in A$  a closed point. Let $\mathfrak{a}$ be an $\mathbb{R}$-ideal on $A$, that is, a formal product $\mathfrak{a}=\prod_{j=1}^r \mathfrak{a}_j^{\lambda_j}$ where each $\lambda_j$ is a positive real number and each $\mathfrak{a}_j$ is a non-zero coherent ideal sheaf on $A$. Denote by $\rm{mld}(0;A,\mathfrak{a})$ the minimal log discrepancy (mld, for short) of the pair $(A,\mathfrak{a})$ at 0 and denote by $a(E;A,\mathfrak{a})$ the log discrepancy of $E$ with repect to $(A,\mathfrak{a})$. We say a prime divisor $E$ with the center 0 computes $\rm{mld}(0;A,\mathfrak{a})$ if $a(E;A,\mathfrak{a})$ equals to $\rm{mld}(0;A,\mathfrak{a})$ or is negative. Musta\c t\v a and Nakamura \cite{MN} proposed a conjecture, says Musta\c t\v a-Nakamura conjecture (MN conjecture, for short), on the boundness of the discrepancies of divisors computing mld on a fixed klt germ. Although the original statement is more general, we state the conjecture only for smooth varieties since we will focus on smooth surfaces in this paper.

\begin{conj}[MN conjecture for smooth varieties]\label{conj}
Let $A$ be a smooth variety of dimension $N$ over an algebraically closed field with a closed point $0$. Given a finite subset $I$ of the positive real numbers, there exists a positive integer $\ell_{N,I}$ depending only on $N$ and $I$ such that for any $\mathbb{R}$-ideal $\mathfrak{a}$  with exponents in $I$, there exists a prime divisor $E$ over $A$ that computes {\rm{mld}}$(0;A,\mathfrak{a})$ and such that its log discrepancy $k_E+1\leq \ell_{N,I}$. 
\end{conj}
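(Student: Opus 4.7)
The plan is to attack Conjecture \ref{conj} by a three-pronged strategy: (i) a combinatorial reduction to monomial valuations on a log resolution, (ii) exploitation of the discreteness gap $\gamma>0$ coming from finiteness of $I$, and (iii) induction on the dimension $N$, with the base case $N=2$ provided by this paper.

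First I would take a common log resolution $f\colon Y\to A$ of $(A,\mathfrak{a})$ and write $f^{-1}\mathfrak{a}_j=\mathcal{O}_Y(-\sum_i c_{ij}F_i)$ with $\sum F_i$ simple normal crossing and $K_{Y/A}=\sum_i k_i F_i$. Any prime divisor $E$ over $A$ with center over $0$ computing the mld can be assumed, after a further toroidal modification, to be a monomial valuation supported at the intersection of some subset of the $F_i$, hence described by a weight vector $\alpha\in\mathbb{Z}_{>0}^k$. Both $k_E+1$ and $\mathrm{ord}_E(\mathfrak{a}_j)$ become linear functionals of $\alpha$, and the mld-computing condition cuts out a face of a rational polyhedral cone. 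Bounding $k_E+1$ becomes a combinatorial question about lattice points on extremal faces.

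Next I would exploit the discreteness coming from $\gamma$. Finiteness of $I$ forces any positive value of $\sum_j \lambda_j\mathrm{ord}_E(\mathfrak{a}_j)-m$ with $m\in\mathbb{Z}_{\geq 0}$ to be at least $\gamma$. Combined with the bound $a(E;A,\mathfrak{a})\leq N$ on smooth varieties, this constrains admissible weight vectors $\alpha$ to a bounded lattice region with explicit size in $\gamma$ and $N$. For the inductive step $N\Rightarrow N+1$, given a divisor $E$ computing mld on an $(N+1)$-dimensional pair, I would restrict $\mathfrak{a}$ to a sufficiently general smooth hypersurface $H\ni 0$ and apply inversion of adjunction to produce a divisor $E'$ over $H$ that computes (or nearly computes) the mld of the restricted pair; the inductive hypothesis bounds $k_{E'}+1$, and one then argues that $k_E+1$ is controlled by $k_{E'}+1$ up to a constant depending only on $N$ and $\gamma$.

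The main obstacle — and the reason Conjecture \ref{conj} remains open in general — is this inductive step. In dimension $\geq 3$, divisors computing mld need not arise from point blow-ups, the weights of the relevant weighted blow-ups are a priori unbounded, and generic hyperplane restriction can both destroy the mld-computing property and enlarge the exponent set $I$ by producing new coefficients from the restricted ideal. Overcoming this will likely require boundedness of complements in the style of Birkar, a global non-inductive approach via ACC for mlds, or a new combinatorial estimate on monomial valuations on a log resolution. The quantitative surface bound $\ell(\gamma)\in O(1/\gamma^2)$ proved in this paper at least suggests the shape the higher-dimensional bound should take.
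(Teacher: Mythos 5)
This statement is Conjecture \ref{conj}, not a theorem of the paper: the paper proves it only for $N=2$ (Theorem \ref{thm1.3}), and the general case is open. Your proposal, as you yourself acknowledge in the last paragraph, does not close the argument, so it cannot be accepted as a proof; but it is worth being precise about where it breaks, because the gap is not only in the inductive step. Already step (i) is structurally unable to produce a bound of the required form. If you pass to a log resolution $f\colon Y\to A$ of $(A,\mathfrak a)$ and describe $E$ as a monomial valuation with weight vector $\alpha$ along strata of $\sum F_i$, then $k_E+1$ is a linear functional of $\alpha$ whose coefficients involve the discrepancies $k_i$ of the $F_i$. These $k_i$ depend on the resolution, hence on $\mathfrak a$ itself, and are unbounded as $\mathfrak a$ varies with exponents in a fixed finite $I$. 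So even a sharp bound on $\alpha$ in terms of $\gamma$ and $N$ bounds $k_E+1$ only relative to the chosen resolution, not by a constant $\ell_{N,I}$. This is exactly the difficulty the paper's surface argument is designed to avoid: following Ishii, one replaces $\mathfrak a$ by a monomial $\mathbb R$-ideal $\mathfrak a_*$ on $\mathbb A_k^2$ \emph{without} passing to a resolution, keeping a fixed identification of divisors over $A$ and over $\mathbb A_k^2$ under which $a(E;A,\mathfrak a)\leq a(E;\mathbb A_k^2,\mathfrak a_*)$ and the mld is preserved; the combinatorics is then carried out on the Newton polygon directly, and the bound genuinely depends only on $\gamma$ and $e$. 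No analogue of this reduction is known in dimension $\geq 3$.

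The inductive step (iii) has the further problems you list, and they are fatal as stated: restriction of $\mathfrak a$ to a general hypersurface $H$ changes the exponent set (the restricted ideal need not have exponents in $I$, and the relevant $\gamma$ can degenerate), the divisor $E$ computing the ambient mld need not induce, via adjunction, a divisor over $H$ computing the restricted mld, and the precise inversion-of-adjunction statements for mlds that such an argument would require are themselves conjectural in the needed generality --- and essentially unavailable in positive characteristic, which is part of the setting of this paper. Also note that the inequality $a(E;A,\mathfrak a)\leq N$ constrains the value of the log discrepancy of the pair, not $k_E$, so combined with the $\gamma$-gap it bounds the possible values of $\sum_j\lambda_j\,\mathrm{ord}_E(\mathfrak a_j)$ modulo integers but does not by itself confine $E$ to a bounded family of weighted blow-ups. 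In short: your outline correctly identifies the known obstructions, but it is a research program, not a proof, and its first reduction would need to be replaced by something resolution-independent (as in the $N=2$ case treated here) before the quantitative part could even get started.
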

MN conjecture is important in birational geometry.  It was proved in \cite{MN} that this conjecture implies the ACC conjecture for mld on a fixed germ in characteristic 0. Kawakita \cite[Theorem 4.6]{Ka} proved that the converse also holds for threefolds. Besides, MN conjecture also plays an important role on basic properties of singularities, for example, it guarantees lower semi-continuity of Mather-Jacobian mld and also stability of  Mather-Jacobian log canonicity under small deformations, which are not known in positive characteristic (see Theorem 1.3 and Proposition 1.7 in \cite{Ish1}).

For surface germs in characteristic 0, MN conjecture was proved by Musta\c t\v a and Nakamura \cite{MN}, and Alexeev \cite[Lemma 3.7]{Ale} proved that it still holds when $I$ is just a DCC set but not a finite set, under the assumption that  $\mathfrak{a}$ is locally principle (i.e. an $\mathbb R$-divisor) and  \rm{mld}$(0;A,\mathfrak{a})\geq 0$. One can see \cite[Theorem B.1]{CH} for a proof of Alexeev's result. Han and Luo \cite[Theorem 1.3]{HL} extended MN conjecture to the case that the germ is not necessary fixed and proved it for surfaces in characteristic 0. As one of key steps in their proof, they showed MN conjecture for the smooth surface germ in characteristic 0 in a more general setting: $I$ is a subset of $\mathbb R_{>0}$ such that
$$\gamma:=\inf\{ \sum_{i}n_ib_i-1>0 \mid n_i\in \mathbb Z_{\geq 0}, b_i\in I \}>0.$$
Note that this condition is satisfied for any DCC sets (see \cite[Lemma 3.2]{HL}). They also gived an explicit upper bound which only depends on $\gamma$.

\begin{thm}\cite[Theorem 1.3]{HL}
Given a subset $I$ of the positive real numbers such that 
$$\{ \sum_{i}n_ib_i-1>0 \mid n_i\in \mathbb Z_{\geq 0}, b_i\in I \}\subseteq [\gamma,+\infty)$$ 
for some $\gamma\in(0,1]$. Let $X$ be a smooth surface over $\mathbb C$ with a closed point 0 and $B$ an effective $\mathbb R$-divisors on $X$ with coefficients in $I$ such that {\rm{mld}}$(0;X,B)\geq 0$. Then there exists a prime divisor $E$ over $X$ that computes {\rm{mld}}$(0;X,B)$ and with its log discrepancy $k_E+1\leq 2^{N_0}$, where 
$$N_0=\left\lfloor 1+ \frac{32}{\gamma^2}+\frac{1}{\gamma}\right\rfloor.$$
\end{thm}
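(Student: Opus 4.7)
The plan is to adapt the Musta\c t\v a--Nakamura blow-up strategy to this DCC-type setting. The guiding philosophy on a smooth surface is that a divisor computing the mld can always be extracted by a short tower of point blow-ups; the main task is to bound ``short'' explicitly in terms of $\gamma$.

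\emph{Step 1 (Reduction to a tower of point blow-ups.)} Any prime divisor $E$ over $X$ with centre $0$ is the final exceptional divisor of a sequence of point blow-ups
$$\pi\colon Y = X_n \to X_{n-1} \to \cdots \to X_1 \to X_0 = X,$$
where each $X_i \to X_{i-1}$ is the blow-up of a closed point $p_{i-1}$ lying over $0$, with exceptional divisor $E_i$. Since a single point blow-up on a smooth surface produces a new exceptional with log discrepancy $1 + \sum k_{E'}$ summed over the at most two previously extracted exceptionals through the centre, a direct induction yields $k_E+1 \leq 2^n$. Choose $E$ computing $\mathrm{mld}(0;X,B)$ with $k_E+1$ minimal among divisors computing the mld; it then suffices to bound $n \leq N_0$.

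\emph{Step 2 (Log-discrepancy inequalities from $\gamma$.)} Let $B_i$ on $X_i$ be defined by $K_{X_i}+B_i = \pi_i^*(K_X+B)$. The multiplicity $\mathrm{mult}_{E_i} B$ is a non-negative integer combination of the coefficients $b_j \in I$, so $1-a(E_i;X,B)=\sum_\ell n_\ell b_\ell - c_i$ for some $n_\ell, c_i \in \mathbb Z_{\geq 0}$. Minimality of $k_E+1$ forces the intermediate $E_1,\ldots,E_{n-1}$ not to compute mld, and combined with $\mathrm{mld}(0;X,B) \geq 0$ this gives the strict inequality $a(E_i;X,B) > \mathrm{mld}(0;X,B) \geq 0$ for $i<n$. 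The $\gamma$-hypothesis then pins $a(E_i;X,B) \geq \gamma$ at each intermediate stage, and a parallel lower bound applies to the point-multiplicities $m_i := \mathrm{mult}_{p_{i-1}} B_{i-1}$.

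\emph{Step 3 (Two-scale termination and the $1/\gamma^2$ constant.)} The crux is a two-scale descent on the pair $(m_i, \text{coefficient of the newest exceptional})$ along the tower. One partitions the sequence into ``phases'' inside which $\lfloor m_i \rfloor$ is constant; the point-blow-up multiplicity formula combined with the $\gamma$-gap from Step 2 forces each phase to terminate after at most $O(1/\gamma)$ blow-ups. Since $m_1 \leq 1/\gamma + O(1)$ from the mld $\geq 0$ hypothesis, there are at most $O(1/\gamma)$ phases in total, and summing yields $n \leq \lfloor 1+32/\gamma^2+1/\gamma\rfloor$. The delicate point --- and the principal obstacle --- is making this two-scale pigeonhole precise enough to extract the explicit constant $32$: one must track two interacting quantities and distinguish whether each blow-up centre lies on one or on two previously-extracted exceptional divisors (in the latter case three coefficients interact via the blow-up formula), while also accounting for the possibility that $\mathrm{mld}(0;X,B)$ is strictly positive so that $a(E_i;X,B) \geq \mathrm{mld}+\gamma$ rather than merely $\geq \gamma$. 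Without this careful case analysis one only obtains a linear-per-phase bound and loses the quadratic dependence.
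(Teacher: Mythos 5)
This statement is quoted from \cite{HL}; the paper under review does not prove it at all, and its own Theorem \ref{thm1.3} reaches a sharper bound by a completely different route (Ishii's reduction to monomial $\mathbb R$-ideals on $\mathbb A^2_k$ followed by Newton-polygon/toric combinatorics). Your outline instead follows the blow-up-tower strategy of \cite{HL} itself, so there is no in-paper argument to measure it against; judged on its own terms, it is a roadmap rather than a proof, and the parts that would carry the quantitative content are missing.

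Concretely, two gaps. First, in your second step the inference from ``$a(E_i;X,B)>\mathrm{mld}(0;X,B)\geq 0$ for $i<n$'' to ``$a(E_i;X,B)\geq\gamma$'' is unjustified: the hypothesis constrains the positive values of $\sum_\ell n_\ell b_\ell-1$, whereas $a(E_i;X,B)=k_{E_i}+1-\sum_\ell n_\ell b_\ell$ subtracts the integer $k_{E_i}+1$, which grows along the tower, and nothing you state excludes $\sum_\ell n_\ell b_\ell$ from lying in $(k_{E_i}+1-\gamma,\,k_{E_i}+1)$; likewise, when $\mathrm{mld}(0;X,B)>0$ the gap your phase argument needs is $a(E_i;X,B)\geq \mathrm{mld}(0;X,B)+\gamma$, and the difference $a(E_i;X,B)-\mathrm{mld}(0;X,B)$ is again not of the form $\sum_\ell n_\ell b_\ell-1$. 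Turning the $\gamma$-hypothesis into usable gap statements of this type is precisely the content of the auxiliary lemmas in \cite{HL}, and it cannot be waved through. Second, your third step --- which you yourself identify as ``the principal obstacle'' --- is the entire quantitative core: the two-scale pigeonhole, the case division according to whether each centre lies on one or two previously extracted exceptional divisors, and the bookkeeping that yields the constant $32/\gamma^2$ are asserted, not performed. Without a bound $n\leq N_0$ on the length of the tower, the elementary estimate $k_E+1\leq 2^n$ from your first step gives nothing, so no bound of the form $2^{N_0}$ is obtained. The only steps actually verified (the tower description, $k_E+1\leq 2^n$, and the fact that intermediate divisors in a minimal tower do not compute the mld) are the easy ones.
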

The upper bound they gived grows roughly like $2^{1/\gamma^2}$ when $\gamma$ tends to 0. In this paper, we will use a completely different approach to give a smaller bound that belongs to $O(\frac{1}{\gamma^2})$ as $\gamma\to 0$, which works in arbitrary characteristic.

The idea comes from Ishii \cite{Ish2}. In the paper, Ishii proved that MN conjecture holds for any smooth surface $A$ in arbitrary characteristic and she pointed out that the upper bound in the conjecture can be calculated by using toric geometry for smooth surfaces. Indeed, she proved that for every $\mathbb{R}$-ideal $\mathfrak{a}$ on a smooth surface $A$ there is a monomial $\mathbb{R}$-ideal $\mathfrak{a}_*$ on $\mathbb{A}_k^2$ with same exponents as $\mathfrak{a}$, such that $\rm{mld}(0;A,\mathfrak{a})=\rm{mld}(0;\mathbb{A}_k^2,\mathfrak{a})$ and $a(E;A,\mathfrak{a})\leq a(E;\mathbb{A}_k^2,\mathfrak{a}_*)$ for any prime divisor $E$ with the center 0 (here we identify prime divisors over $A$ with the center 0 and those over $\mathbb A_k^2$ with the center 0). 
Thus every prime divisor computing $\rm{mld}(0;\mathbb{A}_k^2,\mathfrak{a}_*)$ also computes $\rm{mld}(0;A,\mathfrak{a})$. Then the problem is reduced to the one on the pairs of monomial $\mathbb{R}$-ideals on $\mathbb{A}_k^2$ and can be solved by combinatorics.

The following is the main theorem of this paper.
\begin{thm}\label{thm1.3}
Let $A$ be a smooth surface over an algebraically closed field of arbitrary characteristic and let $0$ be a closed point on $A$. Given a subset $I$ of the positive real numbers, denote $e:=\inf I$ and 
$$\gamma:=\inf\left\{ \sum_{i}n_ib_i-1>0 \mid n_i\in \mathbb Z_{\geq 0}, b_i\in I \right\} .$$
Suppose $\gamma>0$, then 

\leftmargini=6mm
\begin{itemize}
\item[{\rm(1)}] for any $\mathbb{R}$-ideal $\mathfrak{a}$  with exponents in $I$ such that {\rm{mld}}$(0;A,\mathfrak{a})\geq 0$, there exists a prime divisor $E$ over $A$ that computes {\rm{mld}}$(0;A,\mathfrak{a})$ and such that its log discrepancy 
$$k_E+1\leq \max\left\{\left\lfloor \frac{\gamma+1}{e\gamma}\right\rfloor+\left\lceil \frac{\gamma+1}{e}\right\rceil,2\right\};$$

\item[{\rm(2)}] for any $\mathbb{R}$-ideal $\mathfrak{a}$  with exponents in $I$ such that {\rm{mld}}$(0;A,\mathfrak{a})=-\infty$, there exists a prime divisor $E$ over $A$ that computes {\rm{mld}}$(0;A,\mathfrak{a})$ and such that its log discrepancy 
$$k_E+1\leq \left\lfloor \frac{\gamma+1}{e\gamma}\right\rfloor+\left\lceil \frac{\gamma+1}{e}\right\rceil+1.$$
\end{itemize}
\end{thm}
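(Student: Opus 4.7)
The first step is to apply Ishii's toric reduction, as recalled in the introduction. This replaces $(A,\mathfrak{a})$ by $(\mathbb{A}_k^2,\mathfrak{a}_*)$ where $\mathfrak{a}_* = \prod_j \mathfrak{a}_{*,j}^{\lambda_j}$ is monomial with the same exponents, satisfies $\mathrm{mld}(0;A,\mathfrak{a}) = \mathrm{mld}(0;\mathbb{A}_k^2,\mathfrak{a}_*)$, and has $a(E;A,\mathfrak{a}) \leq a(E;\mathbb{A}_k^2,\mathfrak{a}_*)$ for every prime divisor $E$ under the canonical bijection of divisors with center $0$. A prime divisor computing the mld for the toric model then automatically computes it for the original pair with no larger log discrepancy, so I may assume throughout that $A = \mathbb{A}_k^2$ and $\mathfrak{a}$ is monomial.

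\textbf{Combinatorial reformulation.} In the monomial setting, prime divisors over $0$ with center $0$ correspond to primitive $v = (p,q) \in \mathbb{Z}_{>0}^2$, and
\[ a(E_v) \;=\; p+q - \sum_j \lambda_j\, m_j(v), \qquad m_j(v) := \min\bigl\{pa+qb : (a,b) \in \mathrm{supp}(\mathfrak{a}_{*,j})\bigr\}. \]
Each $m_j$ is integer-valued, concave and piecewise $\mathbb{Z}$-linear on the inner normal fan of the Newton polyhedron of $\mathfrak{a}_{*,j}$; the same is therefore true of $a(E_v)$ on the common refinement $\Sigma$. The problem is now purely combinatorial: produce a primitive $v$ that computes mld (Case 1) or that satisfies $a(E_v) < 0$ (Case 2), with $p+q$ within the stated range. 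By the $x \leftrightarrow y$ automorphism I may additionally assume $p \geq q$.

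\textbf{Iterative shortening via the $\gamma$-dichotomy.} The crucial input from $I$ is the $\gamma$-dichotomy: for any primitive $v$, the number $\sum_j \lambda_j m_j(v)$ is a non-negative integer combination of elements of $I$, hence lies in $[0,1]\cup[1+\gamma,+\infty)$. Starting from any valid $v$, the plan is to iteratively replace $v$ by a primitive neighbour $v' \in \mathbb{Z}_{>0}^2$, obtained by a Euclidean/Farey step adapted to the rays of $\Sigma$, with $p'+q' < p+q$ and $v'$ still valid (still computes mld, resp. still satisfies $a(E_{v'})<0$). Two regimes should govern the procedure: while $q > \lceil(\gamma+1)/e\rceil$, the lower bound $\lambda_j \geq e$ makes a $y$-direction shortening admissible, as otherwise the $\lambda$-weighted multiplicities would already exceed $p+q$; once $q$ has been brought within this range, an Euclidean step on the $p$-coordinate remains admissible as long as $p > \lfloor(\gamma+1)/(e\gamma)\rfloor$, the $\gamma$-dichotomy guaranteeing that the change in $\sum_j \lambda_j m_j$ cannot land in the forbidden band $(1,1+\gamma)$ and so cannot obstruct the step. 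In Case 2 the strict inequality $a(E_v)<0$ provides one extra admissible step of slack, which is what produces the $+1$ in the bound of (2).

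\textbf{Main obstacle.} The main technical difficulty is making the shortening procedure rigorous: identifying the correct primitive neighbour $v'$ of $v$ relative to $\Sigma$, verifying that $v'$ is primitive and lies in $\mathbb{Z}_{>0}^2$, and controlling the sign of $a(E_{v'}) - a(E_v)$ by simultaneously invoking the $e$-lower bound on the $\lambda_j$ and the $\gamma$-dichotomy applied to $\sum_j \lambda_j m_j$ on the appropriate cone of $\Sigma$. The $\max\{\cdot,2\}$ appearing in Case 1 covers the degenerate endpoint $v=(1,1)$, where no further shortening is possible. The delicate interplay between the piecewise linear pieces of $a(E_v)$ and the primitivity requirement inside $\Sigma$ is where I expect the bulk of the work to concentrate.
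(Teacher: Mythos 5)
Your reduction to the monomial case and the combinatorial reformulation match the paper exactly, but from that point on your strategy departs genuinely from the paper's, and the part you propose is left too far from a proof to assess.

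The paper does not do any iterative shortening. Instead it goes directly to the relevant compact facet of the Newton polygon $\Gamma$: in Case (1) it locates the edge $\bm{a}_n\bm{a}_{n+1}$ where the slope crosses $-1$, uses Lemma \ref{lem3.2} to deduce $(\bm{a}_n)_y \geq 1+\gamma$ and Lemma \ref{lem3.1} to find a lattice point $\bm{c}$ on that facet with $\bm{a}_n - \bm{c} \in (\lambda_j\mathbb{Z})^2$, takes the rotated vector $\bm{b}'$ as a candidate divisor, bounds $\bm{b}'_x+\bm{b}'_y$ via the numerical Lemma \ref{lem4.2}, and then shows through a short case analysis that the true minimizer $\bm{p}$ satisfies $\bm{p}_x+\bm{p}_y \leq \bm{b}'_x+\bm{b}'_y$. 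Case (2) locates the facet of $\Gamma$ below which $\bm{1}$ lies and an explicit horizontal translate $\bm{d}$ of the lattice point, from which the $+1$ slack arises. Nothing in this is a Euclidean/Farey descent; the argument produces the bounded candidate in one step.

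Your proposal, by contrast, is a descent scheme, and the claims that would make it work are exactly where you stop. You never specify what the ``Euclidean/Farey step adapted to the rays of $\Sigma$'' is: replacing $(p,q)$ by $(p-q,q)$ or $(p,q-p)$ changes the active cone of the normal fan, and the concave piecewise-linear function $\sum_j \lambda_j m_j$ can behave unpredictably under such a jump, so $a(E_{v'}) \leq a(E_v)$ (for Case 1) or $a(E_{v'})<0$ (for Case 2) is in no way automatic. Your assertion that ``the $\lambda$-weighted multiplicities would already exceed $p+q$'' when $q > \lceil(\gamma+1)/e\rceil$ is not argued, and the claim that the $\gamma$-gap ``cannot obstruct the step'' is likewise asserted rather than derived: the dichotomy says $\sum_j\lambda_j m_j(v)$ avoids $(1,1+\gamma)$ at each fixed $v$, but it does not control the \emph{difference} of this quantity between two neighbouring $v$'s, which is what a descent step needs. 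Similarly, explaining the $+1$ in Case (2) as ``one extra admissible step of slack'' is a hope, not a mechanism. These are precisely the ingredients that the paper supplies through Lemmas \ref{lem3.1}, \ref{lem3.2}, \ref{lem4.1}, \ref{lem4.2} and the case analysis in the proof of Theorem \ref{thm1.3}; without analogues of them your plan has no engine. The setup is right, but what you call the ``main obstacle'' is in fact the entire substance of the theorem, and it remains open in your sketch.
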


\begin{rem}
Note that we always have $e\geq \gamma$. Hence the bound belongs to $O(\frac{1}{\gamma^2})$ as $\gamma\to 0$.
\end{rem}
\begin{rem}
Over a smooth surface, every exceptional divisor $E$ can be obtained by a finite sequence of blowing-ups of points and its discrepancy $k_E$ is equal to the number of necessary blowing-ups to obtain $E$. Therefore we obtain a upper bound of the number of necessary blowing-ups of points to get a divisor computing the mld for smooth surfaces.
\end{rem}
The following two examples indicate that our bound is optimal. The proofs of the examples can be found in Section 5.
\begin{exa}\label{e1}
Fix a positive integer $n\geq 2$. Denote
$$e:=\frac{1}{n-1}+\frac{1}{n^2}.$$
Let $I=\{e\}$. Then $e=\inf I$ and 
$$\gamma:=\inf\left\{ \sum_{i}n_ib_i-1>0 \mid n_i\in \mathbb Z_{\geq 0}, b_i\in I \right\}=\frac{n-1}{n^2}.$$
By calculation, in this case the bound in Theorem \ref{thm1.3} (1) is $$n^2+n-1.$$
Let $\mathfrak a=(x^{n^2},y^{n-1})^{e}$ on $\mathbb A_k^2= \text{Spec } k[x,y]$. Then $\rm{mld}(0;\mathbb{A}_k^2,\mathfrak{a})=0$ and the toric divisor corresponding to the vector $(n-1,n^2)$ computes the mld with its log discrepancy equal to $n^2+n-1$. Moreover, any prime divisor that computes the mld satisfies that its log discrepancy $\geq n^2+n-1$. Therefore the bound is optimal.
\end{exa}

\begin{exa}\label{e2}
Fix a positive integer $n$. Let $I=\{1/n\}$. Then $e:=\inf I=1/n$ and 
$$\gamma:=\inf\left\{ \sum_{i}n_ib_i-1>0 \mid n_i\in \mathbb Z_{\geq 0}, b_i\in I \right\}=\frac{1}{n}.$$
By calculation, in this case the bound in Theorem \ref{thm1.3} (2) is $$(n+1)^2+1.$$
Let $\mathfrak a=(x^{n^2+n+1},y^{n+1})^{1/n}$ on $\mathbb A_k^2= \text{Spec } k[x,y]$. Then $\rm{mld}(0;\mathbb{A}_k^2,\mathfrak{a})=-\infty$ and the toric divisor corresponding to the vector $(n+1,n^2+n+1)$ computes the mld with its log discrepancy equal to  $(n+1)^2+1$. Moreover, any prime divisor that computes the mld satisfies that its log discrepancy $\geq (n+1)^2+1$. Therefore the bound is optimal.
\end{exa}

At the end of this section we introduce some notations that will be used in the following sections.

\medskip
\noindent\textbf{Notation.}

\begin{enumerate}[(1)]

\item For $\bm{a},\bm{b} \in \mathbb{R}^2$ ($\bm{a}\neq \bm{b}$), we denote the unique line passing through $\bm{a}$ and $\bm{b}$ by $\overline{\bm{ab}}$. 

\item Given a line $L$ not paralleling to the y-axis in $\mathbb{R}^2$, we decompose $\mathbb{R}^2$ into three parts 
$$\mathbb R^2=L^{+}\cup L\cup L^{-},$$
where
\begin{align*}
L^{+}=\{(x_0,y_0)\in \mathbb{R}^2 \mid & ~y_0>\text{the second coordinate of} \\
&\text{the intersection point of }x=x_0 \text{ and } L\};\\
L^{-}=\{(x_0,y_0)\in \mathbb{R}^2 \mid  & ~y_0<\text{the second coordinate of } \\
&\text{the intersection point of }x=x_0 \text{ and } L\}.
\end{align*}
 
\item For any $\bm{a} \in \mathbb R^2$, we denote its first coordinate by $\bm{a}_x$ and its second coordinate by $\bm{a}_y$. For any $\bm{a},\bm{b} \in \mathbb R^2$, we denote $\langle \bm{a},\bm{b}\rangle=\bm{a}_x\bm{b}_x+\bm{a}_y \bm{b}_y$.

\item We write $\bm{1}$ for the vector $(1,1)$ and $\bm{0}$ for the vector $(0,0)$.

\item We denote by $\mathbb N$ the set of all positive integers.



\item Let $\lambda$ be a positive real number. For any real number $a$, we denote
$$\lceil a \rceil_{\lambda}=\min\{n\lambda\mid n\in \mathbb Z \text{ and } n\lambda\geq a\},$$
$$\lfloor a \rfloor_{\lambda}=\max\{n\lambda\mid n\in \mathbb Z \text{ and } n\lambda\leq a\}.$$
The absence of subscripts means $\lambda=1$. It's not hard to check that
$$\frac{\lceil a \rceil_{\lambda}}{\lambda}=\left\lceil \frac{a}{\lambda} \right\rceil \quad \text{and} \quad \frac{\lfloor a \rfloor_{\lambda}}{\lambda}=\left\lfloor \frac{a}{\lambda} \right\rfloor.$$

\item Let $B=\sum b_i B_i$ be a divisor on a variety where the $B_i$ are prime divisors. Let $\epsilon$ be a real number, then we denote
$$B_{\leq \epsilon}=\sum_{b_i\leq \epsilon} b_i B_i\quad
\text{and} \quad  B_{< \epsilon}=\sum_{b_i< \epsilon} b_i B_i.$$

\end{enumerate}

 \section*{Acknowledgement}
The author expresses his sincere gratitude to Shihoko Ishii for suggesting the problem and for her constant support of this project. The author would also like to thank Jingjun Han for very helpful discussions.

\section{Preliminaries}
Let $k$ ba an algebraically closed field of arbitrary characteristic.

\begin{defn}
Let $A$ be a smooth variety over $k$ and $E$ a prime divisor over $A$, i.e. a prime divisor on a normal variety $Y$ with a birational morphism $f:Y\rightarrow A$. We may write $K_Y=f^*K_A+D$. Then the center of $E$ on $X$ is defined as the image of $E$ on $X$ under the morphism $f$, the discrepancy $k_E$ of $E$ is defined as $\text{mult}_E D$ and the log discrepancy of $E$ is defined as $k_E+1$. 
\end{defn}

\begin{defn}\label{d1}
Let $A$ be a smooth variety over $k$ and $\mathfrak{a}$ an $\mathbb{R}$-ideal on $A$, i.e. a formal product $\prod_{j=1}^r \mathfrak{a}_j^{\lambda_j}$ where each $\lambda_j$ is a positive real number and each $\mathfrak{a}_j$ is a non-zero coherent ideal sheaf on $A$. 
The support of $\mathfrak{a}$ is defined to be $\cup_{j=1}^r V(\mathfrak{a}_j)$, where $V(\mathfrak{a}_j)$ is the zero set of the ideal sheaf $\mathfrak{a}_j$. For a prime divisor $E$ over $A$, the log discrepancy of $E$ with respect to $(A,\mathfrak a)$ is defined to be 
$$a(E;A,\mathfrak a)=k_E+1-\sum_{j=1}^r \lambda_j {\rm val}_E(\mathfrak a_j).$$
The minimal log discrepancy of the pair $(A,\mathfrak{a})$ at a closed point 0 is given by
$${\rm mld}(0;A,\mathfrak{a})=\inf\{a(E;A,\mathfrak a)\mid E \text{ is a prime divisor over $A$ with the center 0}\}.$$
We say a prime divisor $E$ over $A$ with the center 0 computes ${\rm mld}(0;A,\mathfrak{a})$ if
$$a(E;A,\mathfrak a)= 
\begin{cases}
{\rm mld}(0;A,\mathfrak{a}),  & \text{if}\quad  \rm{mld}(0;A,\mathfrak{a})\geq 0, \\
< 0, & \text{if} \quad \rm{mld}(0;A,\mathfrak{a})=-\infty.
\end{cases}
$$ 
\end{defn}

\begin{defn}
An $\mathbb R$-ideal  $\mathfrak{a}=\prod_{j=1}^r \mathfrak{a}_j^{\lambda_j}$ on $\mathbb{A}_k^2=\text{Spec } k[x,y]$ is called a monomial $\mathbb R$-ideal if each $\mathfrak{a}_j$ is generated by monomials.
\end{defn}

Let $A$ be a smooth surface over $k$ with a closed point 0 and an $\mathbb{R}$-ideal $\mathfrak{a}$ on $A$. In the proof of  \cite[Theorem 1.4]{Ish2}, Ishii proved that there are a regular system of parameters $x,y$ of $\mathcal{O}_{A,0}$ and a monomial $\mathbb{R}$-ideal $\mathfrak{a}_*$ on $\mathbb{A}_k^2$ with same exponents as $\mathfrak{a}$ such that 

\begin{enumerate}[\quad (1)]
\item $\rm{mld}(0;A,\mathfrak{a})=\rm{mld}(0;\mathbb{A}_k^2,\mathfrak{a}_*)$;
\item if we identify prime divisors over $A$ with the center 0 and those over $\mathbb A_k^2$ with the center 0 in terms of the \'etale morphism from $A$ to $\mathbb A_k^2$ induced by  parameters $x,y$, then
$$a(E;A,\mathfrak{a})\leq a(E;\mathbb{A}_k^2,\mathfrak{a}_*)$$
for any prime divisor $E$ over $A$ (or over $\mathbb A_k^2$) with the center 0.
\end{enumerate}
Hence every prime divisor computing $\rm{mld}(0;\mathbb{A}_k^2,\mathfrak{a}_*)$ also computes $\rm{mld}(0;A,\mathfrak{a})$. Therefore, the problem is reduced into the one on the pairs of monomial $\mathbb{R}$-ideals on $\mathbb{A}_k^2$.

\section{Newton polytope}
Let $\mathfrak a=\prod_{j=1}^r \mathfrak{a}_j^{\lambda_j}$ be a monomial $\mathbb R$-ideal on $\mathbb A^2_k$ and write $\text{Supp } \mathfrak a$ for the set 
$$\left\{\sum_j \lambda_j(a_j,b_j) \in \mathbb R_{\geq 0}^2\mid (a_j,b_j) \text{ is the exponent of a monomial in } \mathfrak{a}_j\right\}.$$
We denote by $\Gamma(\mathfrak a)$ the convex hull of $(\text{Supp } \mathfrak a+\mathbb R_{\geq 0}^2)$ in $\mathbb R_{\geq 0}^2$, which is called the Newton polytope of $\mathfrak a$. Then $\Gamma(\mathfrak a)$ has finite vertices and every compact 1-dimensional faces has slope less than 0. Every vertex $\bm{a}$ of $\Gamma(\mathfrak a)$ can be written as the form $\sum_{j=1}^r \lambda_j \bm{a}_j$, where $\bm{a}_j\in \mathbb Z_{\geq 0}^2$ is a vertex of $\Gamma(\mathfrak a_j)$.

\begin{lem}\label{lem3.1}
Let $\mathfrak a=\prod_{j=1}^r \mathfrak{a}_j^{\lambda_j}$ be a monomial $\mathbb R$-ideal on $\mathbb{A}_k^2$. If $\bm{a},\bm{b} \in \mathbb R_{\geq 0}^2$ are two vertices of a 1-dimensional compact faces of  $\Gamma(\mathfrak a)$ such that $\bm{a}_y>\bm{b}_y$, then there exist $0<\alpha\leq 1$ and $j=1,\cdots,r$ such that $(\bm{a}-\bm{c})\in (\lambda_j\mathbb Z)^2$, where $\bm{c}=(1-\alpha) \bm{a}+\alpha \bm{b}$.
\end{lem}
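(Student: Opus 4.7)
The plan is to exploit the Minkowski-sum structure of the Newton polygon. From the definition of $\operatorname{Supp}\mathfrak a$ together with the identity $\operatorname{conv}(X)+\operatorname{conv}(Y)=\operatorname{conv}(X+Y)$, one gets
\[
\Gamma(\mathfrak a)\;=\;\sum_{j=1}^{r}\lambda_j\,\Gamma(\mathfrak{a}_j),
\]
a Minkowski sum of convex regions in $\mathbb R_{\geq 0}^2$. Moreover, every vertex of $\Gamma(\mathfrak a_j)$ is the exponent of a monomial appearing in $\mathfrak a_j$, so it lies in $\mathbb Z_{\geq 0}^2$.

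The key tool I plan to invoke is the classical edge decomposition for Minkowski sums of convex polygons: every compact edge $E$ of a Minkowski sum $\sum_j P_j$ decomposes uniquely as $E=\sum_j E_j$, where each $E_j$ is either a vertex of $P_j$ or an edge of $P_j$ parallel to $E$. Applied to the compact facet of $\Gamma(\mathfrak a)$ with endpoints $\bm b$ and $\bm a$, this produces vertices $\bm a^{(j)},\bm b^{(j)}\in \Gamma(\mathfrak{a}_j)\cap \mathbb Z^2$ with
\[
\bm a=\sum_{j}\lambda_j\bm a^{(j)},\qquad \bm b=\sum_{j}\lambda_j\bm b^{(j)},
\]
and with $\bm a^{(j)}-\bm b^{(j)}$ a nonnegative scalar multiple of the edge direction $\bm a-\bm b$ for every $j$.

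To conclude, write $\lambda_j(\bm a^{(j)}-\bm b^{(j)})=\alpha_j(\bm a-\bm b)$ with $\alpha_j\geq 0$; summing over $j$ gives $\sum_j \alpha_j=1$, so some index $j$ must satisfy $0<\alpha_j\leq 1$. Fixing such a $j$ and setting $\alpha=\alpha_j$, $\bm c=(1-\alpha)\bm a+\alpha\bm b$, one computes $\bm a-\bm c=\alpha(\bm a-\bm b)=\lambda_j(\bm a^{(j)}-\bm b^{(j)})$, which lies in $(\lambda_j\mathbb Z)^2$ since $\bm a^{(j)},\bm b^{(j)}\in\mathbb Z^2$. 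The only real obstacle is stating and applying the Minkowski edge decomposition correctly; once that is in place, the lattice integrality of $\bm a-\bm c$ is automatic and the control $0<\alpha\le1$ follows from the partition-of-unity $\sum_j\alpha_j=1$.
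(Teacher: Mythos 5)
Your proof is correct, and it takes a genuinely different --- and arguably cleaner --- route than the paper's. The paper only proves the case $r=2$ explicitly: it writes $\bm a=\lambda_1\bm a_{i_1}+\lambda_2\bm b_{j_1}$ and $\bm b=\lambda_1\bm a_{i_2}+\lambda_2\bm b_{j_2}$ using the (easy) vertex decomposition of the Minkowski sum, then forms the ``crossed'' sums $\bm f=\lambda_1\bm a_{i_1}+\lambda_2\bm b_{j_2}$ and $\bm g=\lambda_1\bm a_{i_2}+\lambda_2\bm b_{j_1}$, observes that $\bm f+\bm g=\bm a+\bm b$ together with convexity forces both onto the segment $[\bm a,\bm b]$, and finishes with a case split on $i_1<i_2$ versus $j_1<j_2$. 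That crossed-sums trick is, in effect, a by-hand re-derivation of exactly the Minkowski edge decomposition theorem you cite as a black box. By invoking the general statement $E=\sum_j\lambda_j E_j$ directly, you obtain $\bm a=\sum_j\lambda_j\bm a^{(j)}$, $\bm b=\sum_j\lambda_j\bm b^{(j)}$ with $\lambda_j(\bm a^{(j)}-\bm b^{(j)})=\alpha_j(\bm a-\bm b)$ and $\sum_j\alpha_j=1$; picking any $j$ with $\alpha_j>0$ (which exists since the sum is $1$, and automatically satisfies $\alpha_j\le 1$ since all terms are nonnegative) gives the lattice conclusion because the vertices of each $\Gamma(\mathfrak a_j)$ are monomial exponents and hence lie in $\mathbb Z_{\ge 0}^2$. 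What your route buys is uniformity in $r$ and no ad hoc bookkeeping; what the paper's route buys is self-containment, since it avoids appealing to the full edge-decomposition theorem. The one point you glide over --- that the orientation of each $\bm a^{(j)}-\bm b^{(j)}$ agrees with that of $\bm a-\bm b$, making the $\alpha_j$ nonnegative --- is indeed part of the standard statement (the two endpoints of $E$ are picked out by the same perturbed normal covectors that simultaneously pick out $\bm a^{(j)}$ and $\bm b^{(j)}$ in each $P_j$), so nothing is missing.
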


\begin{proof}
We may write $\bm{a}=\sum_{j=1}^r \lambda_j \bm{a}_j$ and $\bm{b}=\sum_{j=1}^r \lambda_j \bm{b}_j$, where $\bm{a}_j, \bm{b}_j \in \mathbb Z_{\geq 0}^2$ are vertices of $\Gamma(\mathfrak a_j)$. Since $\bm{a}\neq \bm{b}$, there exists $j$ such that $\bm{a}_j\neq \bm{b}_j$. Without loss of generality, we may suppose that $\bm{a}_1\neq \bm{b}_1$. Let 
$$\bm{c}:=\lambda_1\bm{b}_1+\sum_{j=2}^r \lambda_j \bm{a}_j\quad \text{ and }\quad \bm{d}:=\lambda_1\bm{a}_1+\sum_{j=2}^r \lambda_j \bm{b}_j.$$ Then $\bm{c},\bm{d}\in \Gamma(\mathfrak a)$. Since $\bm{a},\bm{b}$ are vertices of $\Gamma(\mathfrak{a})$, both $\bm{c}$ and $\bm{d} \in \overline{\bm{ab}}\cup\overline{\bm{ab}}^+$. On the other hands, $\bm{c}+\bm{d}=\bm{a}+\bm{b}$, this implies that both $\bm{c},\bm{d} \in \overline{\bm{ab}}$. So we have 
$$\bm{c},\bm{d} \in \{(1-\alpha) \bm{a}+\alpha \bm{b}\mid 0\leq \alpha\leq 1\}$$
since $\bm{a},\bm{b}$ are vertices of $\Gamma(\mathfrak{a})$.
Note that $\bm{a}-\bm{c}=\lambda_1(\bm{a}_1-\bm{b}_1)\in (\lambda_1 \mathbb Z)^2$ and $\bm{a}-\bm{c}\neq 0$, the proof is completed.
\end{proof}

\begin{lem}\label{lem3.2}
Given a subset $I$ of positive real numbers. Let $\mathfrak a=\prod_{j=1}^r \mathfrak{a}_j^{\lambda_j}$ be a monomial $\mathbb R$-ideal on $\mathbb A^2_{k}$ with exponents in $I$. If $\bm{a}$ is a vertex of $\Gamma(\mathfrak a)$, then 
$$\bm{a}_x,\bm{a}_y\in \left\{ \sum_{i}n_ib_i\mid n_i\in \mathbb Z_{\geq 0}, b_i\in I \right\}.$$
\end{lem}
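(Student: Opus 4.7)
The strategy is to reduce the claim to the statement that every vertex of $\Gamma(\mathfrak a)$ lies in $\mathrm{Supp}\,\mathfrak a$, after which the two coordinates of the vertex can be read off directly from the defining sum.

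First I would verify that a vertex $\bm a$ of $\Gamma(\mathfrak a)$ belongs to $\mathrm{Supp}\,\mathfrak a$. Writing $\Gamma(\mathfrak a) = \mathrm{conv}(\mathrm{Supp}\,\mathfrak a) + \mathbb R_{\geq 0}^2$, any element $\bm a \in \Gamma(\mathfrak a)$ decomposes as $\bm a = \bm s + \bm r$ with $\bm s \in \mathrm{conv}(\mathrm{Supp}\,\mathfrak a)$ and $\bm r \in \mathbb R_{\geq 0}^2$. Extremality of the vertex forces $\bm r = \bm 0$, because otherwise $\bm s$ and $\bm s + 2\bm r$ would both lie in $\Gamma(\mathfrak a)$ with $\bm a$ as their midpoint. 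Applying the same extremality argument inside $\mathrm{conv}(\mathrm{Supp}\,\mathfrak a)$, together with the description of $\mathrm{Supp}\,\mathfrak a$ as the Minkowski sum $\sum_j \lambda_j\, \mathrm{Supp}\,\mathfrak a_j$, shows that $\bm a$ must in fact be a combination $\sum_j \lambda_j \bm v_j$ where each $\bm v_j$ is a vertex of $\Gamma(\mathfrak a_j)$, and hence an honest monomial exponent in $\mathfrak a_j$.

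Writing $\bm v_j = (a_j, b_j) \in \mathbb Z_{\geq 0}^2$, the two coordinates of $\bm a$ are then
\[
\bm a_x = \sum_{j=1}^r a_j \lambda_j, \qquad \bm a_y = \sum_{j=1}^r b_j \lambda_j,
\]
each manifestly a non-negative integer combination of the exponents $\lambda_j$, which by hypothesis lie in $I$. This is exactly the desired conclusion.

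The only step that is not pure unwinding of definitions is the first one, namely that a vertex of $\Gamma(\mathfrak a)$ must come from the support itself rather than from a shift by $\mathbb R_{\geq 0}^2$ or by a convex combination of support points. This is a standard property of Newton polygons of monomial ideals (and their Minkowski sums), and I do not expect it to be the main obstacle; the bulk of the lemma is formal.
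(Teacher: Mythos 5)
Your argument is correct and coincides with what the paper itself intends, since the paper's entire proof of this lemma is a one-line assertion that it is an immediate consequence of the definition of the Newton polygon; you have simply supplied the standard unwinding, namely that a vertex of $\Gamma(\mathfrak a)=\mathrm{conv}(\mathrm{Supp}\,\mathfrak a+\mathbb R_{\geq 0}^2)$ must belong to $\mathrm{Supp}\,\mathfrak a$, hence has the form $\sum_j\lambda_j(a_j,b_j)$ with $(a_j,b_j)\in\mathbb Z_{\geq 0}^2$ and $\lambda_j\in I$. One minor remark: the intermediate step asserting that each $(a_j,b_j)$ is moreover a vertex of $\Gamma(\mathfrak a_j)$ is true but unnecessary, since membership of $\bm a$ in $\mathrm{Supp}\,\mathfrak a$ already yields the required integer lattice points.
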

\begin{proof}
It follows from the fact that every vertex $\bm{a}$ of $\Gamma(\mathfrak a)$ can be written as the form $\sum_{j=1}^r \lambda_j \bm{a}_j$, where $\bm{a}_j\in \mathbb Z_{\geq 0}^2$ is a vertex of $\Gamma(\mathfrak a_j)$.
\end{proof}

Let $\mathfrak a$ be a monomial $\mathbb R$-ideal on $\mathbb A_k^2$ with the Newton polytope $\Gamma$. For any $\bm{p}=(p_1,p_2)\in \mathbb N^2$ such that $p_1,p_2$ are coprime, we denote by $E_{\bm{p}}$ the prime toric divisor over $\mathbb A_k^2$ which corresponds to the 1-dimensional cone 
$\bm{p}\mathbb R_{\geq 0}$, then we have $k_{E_{\bm{p}}}+1=\langle \bm{p},\bm{1}\rangle$ and 
${\rm val}_{E_{\bm{p}}}(\mathfrak a)=\langle \bm{p},\Gamma\rangle$,  where  
$$\langle \bm{p},\Gamma\rangle:=\inf\{\langle  \bm{p},\bm{q}\rangle\mid \bm{q}\in  \Gamma\}.$$
Therefore, $a(E_{\bm{p}};\mathbb A^2_k,\mathfrak a)=\langle \bm{p},\bm{1}\rangle-\langle \bm{p},\Gamma\rangle$.

\begin{lem}\label{lem3.3}
With the above notations, there exists $\bm{p}=(p_1,p_2)\in \mathbb N^2$ with $p_1,p_2$ coprime such that $E_{\bm{p}}$ computes {\rm{mld}}$(0;\mathbb A_k^2,\mathfrak{a})$. That is to say, if {\rm{mld}}$(0;\mathbb A_k^2,\mathfrak{a})\geq 0$, there exists $\bm{p}\in \mathbb N^2$ such that
$$\langle \bm{p},\bm{1}\rangle-\langle \bm{p},\Gamma\rangle=\inf\{\langle \bm{q},\bm{1}\rangle-\langle \bm{q},\Gamma\rangle\mid \bm{q}\in \mathbb N^2\}={\rm{mld}}(0;\mathbb A_k^2,\mathfrak{a})$$
and if {\rm{mld}}$(0;\mathbb A_k^2,\mathfrak{a})=-\infty$, there exists $\bm{p}\in \mathbb N^2$ such that
$$\langle \bm{p},\bm{1}\rangle-\langle \bm{p},\Gamma\rangle<0.$$
\end{lem}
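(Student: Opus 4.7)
The approach is to reduce the minimization defining $\text{mld}(0;\mathbb A_k^2,\mathfrak a)$ to a problem over toric divisors and then analyze it via the normal fan of $\Gamma$.

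For the reduction, let $E$ be any prime divisor over $\mathbb A_k^2$ with center $0$; set $m_i = v_E(x_i)$ for $x_1=x, x_2=y$ and write $(m_1,m_2) = d(p_1,p_2)$ with $(p_1,p_2) \in \mathbb N^2$ primitive. Since each $\mathfrak a_j$ is monomial,
$$v_E(\mathfrak a_j) = \min\{am_1 + bm_2 : (a,b) \in \operatorname{Supp}\mathfrak a_j\} = d \cdot v_{E_{(p_1,p_2)}}(\mathfrak a_j),$$
so $v_E(\mathfrak a) = d\langle(p_1,p_2),\Gamma\rangle$. Combined with the standard valuative inequality $k_E + 1 \geq v_E(x) + v_E(y) = d(p_1+p_2)$, this gives $a(E;\mathbb A_k^2,\mathfrak a) \geq d \cdot a(E_{(p_1,p_2)};\mathbb A_k^2,\mathfrak a)$. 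Hence $a(E_{(p_1,p_2)}) \leq a(E)$ when $a(E)\geq 0$ and $a(E_{(p_1,p_2)}) < 0$ when $a(E) < 0$, so the required witness can always be found among toric divisors.

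Next, use the normal fan of $\Gamma$ to partition $\mathbb R_{>0}^2$ into finitely many closed $2$-dimensional cones $C_1,\ldots,C_N$, where $C_i$ is the locus on which $\langle\bm p,\Gamma\rangle = \langle\bm p,\bm v_i\rangle$ for a fixed vertex $\bm v_i$ of $\Gamma$. On each $C_i$, the toric log discrepancy $a(E_{\bm p}) = \langle\bm p,\bm 1 - \bm v_i\rangle$ is linear in $\bm p$. If this form takes a negative value on some $\bm p \in \mathbb N^2 \cap C_i$, then $E_{\bm p}$ is a toric divisor with negative log discrepancy, serving as the witness when $\text{mld}=-\infty$. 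Otherwise the form is $\geq 0$ on every cone (so $\text{mld}\geq 0$), and it remains to attain the infimum over $\mathbb N^2$. On each $C_i$, either both coefficients of the linear form are strictly positive -- in which case its sublevel sets are bounded and the minimum on $\mathbb N^2 \cap C_i$ is achieved -- or the form vanishes along a bounding ray of $C_i$. In the latter situation the key point is that this ray is perpendicular to a compact 1-facet of $\Gamma$; since $\Gamma$ is a Minkowski sum of polygons $\lambda_j \Gamma(\mathfrak a_j)$ whose vertices lie in $\lambda_j \mathbb Z^2$, every compact facet of $\Gamma$ has rational slope. Hence the ray has rational direction and contains primitive lattice points $\bm p$ with $a(E_{\bm p})=0$. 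Taking the best such $\bm p$ across the finitely many cones yields the desired toric divisor realizing $\text{mld}$.

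The main technical obstacle is the valuative inequality $k_E + 1 \geq v_E(x) + v_E(y)$ employed in the reduction: it expresses that among divisorial valuations with prescribed values on the coordinates, the monomial (toric) one has minimal log discrepancy. In characteristic zero it follows by taking a log resolution $\pi:Y\to \mathbb A_k^2$ extracting $E$ and comparing the orders of vanishing of $\pi^*x$, $\pi^*y$ and $\pi^*(dx \wedge dy)$ along $E$; in arbitrary characteristic over a smooth surface it can be checked inductively along a sequence of point blow-ups extracting $E$. Once this inequality is granted, the remainder of the proof is pure convex geometry on the Newton polygon.
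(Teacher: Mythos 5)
Your proposal is correct in outline and takes a genuinely different route from the paper. The paper disposes of Lemma~\ref{lem3.3} in one line: take a toric log resolution of $(\mathbb A_k^2, \mathfrak a \cdot \mathfrak m_0)$ and invoke the standard fact that some divisor on a log resolution of the ideal times the maximal ideal computes the mld. You instead give a self-contained argument in two steps: (i) reduce to toric divisors via the valuative inequality $k_E+1\geq v_E(x)+v_E(y)$, which correctly gives $a(E)\geq d\cdot a(E_{\bm p})$ where $d\bm p=(v_E(x),v_E(y))$; (ii) show that the infimum of $\bm q\mapsto\langle\bm q,\bm 1\rangle-\langle\bm q,\Gamma\rangle$ over $\mathbb N^2$ is attained by analysing the normal fan. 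The reduction in (i) is essentially the lc-ness of the SNC pair $(\mathbb A^2,\{x=0\}+\{y=0\})$, and your observation that it can be checked inductively along point blow-ups (so it works in arbitrary characteristic) is the right one; this replaces the resolution black box by an elementary induction, which fits the spirit of the rest of the paper.

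One small imprecision in step (ii): the dichotomy ``either both coefficients of $L_i=\langle\cdot,\bm 1-\bm v_i\rangle$ are strictly positive, or $L_i$ vanishes on a bounding ray of $C_i$'' is not exhaustive. Since $\bm 1\in\Gamma$ forces $L_i\geq 0$ on $C_i$, there is a third case in which one coefficient of $L_i$ is nonpositive (e.g.\ $(\bm v_i)_y>1$), yet $L_i>0$ on $C_i\setminus\{\bm 0\}$ because the kernel ray lies outside $C_i$. In that case neither alternative of your dichotomy holds, but the conclusion still does: $L_i$ bounded below by a positive multiple of $|\bm q|$ on $C_i$ makes the sublevel sets $\{L_i\leq M\}\cap C_i$ bounded, so the minimum over $\mathbb N^2\cap C_i$ is attained. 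The correct dichotomy is ``either $L_i>0$ on $C_i\setminus\{\bm 0\}$ (minimum attained by boundedness of sublevel sets), or $L_i$ vanishes on a bounding ray (minimum attained using rationality of the ray, via Lemma~\ref{lem3.1} or your Minkowski-sum argument).'' With this correction the argument is complete.
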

\begin{proof}
It suffices to show that there exists a toric prime divisor over $\mathbb A_k^2$ computing {\rm{mld}}$(0;\mathbb A_k^2,\mathfrak{a})$. Take a toric log resolution $f:Y\rightarrow \mathbb A_k^2 $ of the pair $(\mathbb A_k^2, \mathfrak{a\cdot m}_0)$, where $\mathfrak{m}_0$ is the maximal ideal of the origin.  
Define
$$B:=\sum_F (1-a(F;\mathbb A_k^2,\mathfrak a))F,$$
where $F$ runs all prime divisors on $Y$. Then $B$ is a simple normal crossing toric divisor on $Y$ and $a(E;\mathbb A_k^2,\mathfrak{a})=a(E;Y,B)$ for any prime divisor $E$ over $\mathbb A_k^2$.

If {\rm{mld}}$(0;\mathbb A_k^2,\mathfrak{a})\geq 0$, then coefficients in $B$ are no more than 1 and hence
$${\rm{mld}}(0;\mathbb A_k^2,\mathfrak{a})=\min\{a(E;Y,B) \mid \text{$E$ is a prime divisor on $Y$ with center$_X E=0$}\}.$$
So there is an exceptional prime divisor $E$ on $Y$ such that $a(E;Y,B)={\rm{mld}}(0;\mathbb A_k^2,\mathfrak{a})$ and hence $E$ computes ${\rm{mld}}(0;\mathbb A_k^2,\mathfrak{a})$. 

If {\rm{mld}}$(0;\mathbb A_k^2,\mathfrak{a})=-\infty$, there exists a component of $B$ with coefficient $>1$. Then there exists a toric prime divisor $E$ over $Y$ with $\text{center}_X E=0$ such that $a(E;Y,B)<0$ and hence $E$ computes ${\rm{mld}}(0;\mathbb A_k^2,\mathfrak{a})$. 
\end{proof}
\begin{lem}\label{lem3.4}
With the above notations, the followings are equivalent:

{\rm(1)} {\rm{mld}}$(0;\mathbb A^2_k,\mathfrak{a})\geq 0$;

{\rm(2)} $\bm{1}\in \Gamma$.
\end{lem}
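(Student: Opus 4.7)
The plan is to combine Lemma \ref{lem3.3} with the formula $a(E_{\bm p};\mathbb A^2_k,\mathfrak a)=\langle \bm p,\bm 1\rangle-\langle \bm p,\Gamma\rangle$ to reduce the equivalence to a convex-geometric statement about $\Gamma$, and then handle the two directions separately. By Lemma \ref{lem3.3}, ${\rm mld}(0;\mathbb A^2_k,\mathfrak a)\geq 0$ holds if and only if $a(E_{\bm p};\mathbb A^2_k,\mathfrak a)\geq 0$ for every $\bm p\in\mathbb N^2$; indeed, if the mld were $-\infty$ the lemma would exhibit some $\bm p\in\mathbb N^2$ with $a(E_{\bm p})<0$. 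So the task reduces to showing that $\bm 1\in\Gamma$ if and only if $\langle \bm p,\bm 1\rangle\geq\langle \bm p,\Gamma\rangle$ for every $\bm p\in\mathbb N^2$.

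The forward direction $(2)\Rightarrow(1)$ is immediate from the definition $\langle \bm p,\Gamma\rangle=\inf\{\langle \bm p,\bm q\rangle : \bm q\in\Gamma\}$: if $\bm 1\in\Gamma$ then $\langle \bm p,\Gamma\rangle\leq\langle \bm p,\bm 1\rangle$ for every $\bm p\in\mathbb R^2$, so every toric divisor has non-negative log discrepancy and the reduction gives ${\rm mld}\geq 0$.

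For the contrapositive of $(1)\Rightarrow(2)$, I would use the separating-hyperplane theorem. Assume $\bm 1\notin\Gamma$ and let $\bm q_0\in\Gamma$ be the nearest point of $\Gamma$ to $\bm 1$, which exists and is distinct from $\bm 1$ since $\Gamma$ is closed and convex. Set $\bm p_0:=\bm q_0-\bm 1\neq\bm 0$. The first-order optimality of $\bm q_0$ yields $\langle \bm p_0,\bm q-\bm q_0\rangle\geq 0$ for every $\bm q\in\Gamma$, hence $\langle \bm p_0,\bm q\rangle\geq\langle \bm p_0,\bm q_0\rangle=\langle \bm p_0,\bm 1\rangle+\|\bm p_0\|^2>\langle \bm p_0,\bm 1\rangle$ for every $\bm q\in\Gamma$. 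The upward-closed property $\Gamma+\mathbb R_{\geq 0}^2\subseteq\Gamma$ then forces $\bm p_0\in\mathbb R_{\geq 0}^2$: if some coordinate $(\bm p_0)_i$ were negative, translating $\bm q_0$ along the $i$-th coordinate direction would send $\langle \bm p_0,\bm q\rangle$ to $-\infty$, contradicting strict separation. Because $\Gamma$ has only finitely many vertices, the infimum $\langle \bm p,\Gamma\rangle$ is attained at a vertex and depends continuously on $\bm p$, so the strict inequality $\langle \bm p,\bm 1\rangle<\langle \bm p,\Gamma\rangle$ persists under a small perturbation of $\bm p_0$ into $\mathbb Q_{>0}^2$; clearing denominators then gives $\bm p\in\mathbb N^2$ with $a(E_{\bm p})<0$, contradicting $(1)$.

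The only delicate step is the last perturbation: one must ensure that the separating normal can be chosen with strictly positive \emph{integer} components, not merely in $\mathbb R_{\geq 0}^2$. This is handled by the polyhedral nature of $\Gamma$ (finitely many vertices) together with the denseness of $\mathbb Q_{>0}^2$ in the open cone of strictly separating directions.
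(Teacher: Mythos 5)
Your proof is correct, and for the direction $(1)\Rightarrow(2)$ it takes a genuinely different route from the paper. The easy direction and the reduction via Lemma~\ref{lem3.3} are essentially the same in both. For the hard direction, the paper works directly with the polygonal structure of $\Gamma$: it picks a compact $1$-dimensional facet whose supporting line $L$ separates $\bm{1}$ from $\Gamma$, perturbs $L$ slightly so that it still separates, still has negative slope, and passes through two lattice points $\bm c,\bm d$, and then reads off the integral normal vector $\bm p=(\bm c_y-\bm d_y,\bm d_x-\bm c_x)\in\mathbb N^2$ explicitly. You instead invoke the nearest-point projection onto the closed convex set $\Gamma$ to produce a separating normal $\bm p_0$, show $\bm p_0\in\mathbb R_{\geq 0}^2$ from the recession cone $\mathbb R_{\geq 0}^2\subseteq\Gamma$, and then use continuity of $\bm p\mapsto\langle\bm p,\Gamma\rangle$ (a finite minimum of linear forms over the vertices) plus density of $\mathbb Q_{>0}^2$ to perturb to a strictly positive rational normal, clearing denominators at the end. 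The paper's argument is more elementary and more explicit, giving the separating vector by hand; yours is more conceptual, leans on standard convex analysis rather than surface-specific facet geometry, and would transfer verbatim to $\mathbb A_k^N$ for any $N$. One point worth noting in your favour: the paper's phrasing assumes a compact $1$-dimensional facet already separates $\bm 1$ from $\Gamma$, which silently omits the degenerate case where $\Gamma$ has a single vertex and no compact edge; your projection argument handles that case with no extra work, since it never requires a compact facet. Each approach is sound; the choice is a trade-off between explicitness and generality.
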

\begin{proof} 
If $\bm{1}\in \Gamma$, by Lemma \ref{lem3.3} there is $\bm{p}=(p_1,p_2)\in \mathbb N^2$ with $p_1,p_2$ coprime such that $E_{\bm{p}}$ computes {\rm{mld}}$(0;\mathbb A_k^2,\mathfrak{a})$. Since $\bm{1}\in \Gamma$, we have $\langle \bm{p},\bm{1}\rangle-\langle \bm{p},\Gamma\rangle\geq 0$, which implies that {\rm{mld}}$(0;\mathbb A_k^2,\mathfrak{a})\geq 0$.

If $\bm{1}\notin \Gamma$, there exists a  1-dimensional face of $\Gamma$ with normal vector $\textbf{e}=(e_1,e_2)$ such that $e_1+e_2< \langle \textbf{e},\Gamma \rangle$. 
After some small perturbations, we may suppose that $e_1,e_2$ are positive rational numbers and the above inequality still holds. Multiplying with some positive rational number, we may suppose that $e_1,e_2$ are coprime positive integers. Since $\langle \bm{e},\bm{1}\rangle<\langle \bm{e},\Gamma\rangle$, we have $a(E_{\bm{e}};\mathbb A_k^2,\mathfrak a)<0$, which implies {\rm{mld}}$(0;\mathbb A_k^2,\mathfrak{a})=-\infty$.
\end{proof}

\section{Proof of the main theorem}
\begin{lem}\label{lem4.1}
Let $\lambda$ be a positive real number.
If $a,b\in \lambda \mathbb Z$ satisfy $1<a\leq b\leq 2$, then 
$$\left\lfloor \frac{a}{a-1}\right\rfloor_{\lambda} +a\geq \left\lfloor \frac{b}{b-1}\right\rfloor_{\lambda} +b.$$
\end{lem}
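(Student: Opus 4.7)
The plan is to reduce to the one-step case $b = a + \lambda$ and then telescope. Since $a, b \in \lambda\mathbb{Z}$ with $a \leq b$, we may write $b = a + m\lambda$ for some $m \in \mathbb{Z}_{\geq 0}$. The case $m = 0$ is trivial; for $m \geq 1$, every intermediate point $a + k\lambda$ with $0 \leq k \leq m$ still lies in $(1, 2]$, so it is enough to establish
$$\left\lfloor \frac{a}{a-1}\right\rfloor_{\lambda} + a \;\geq\; \left\lfloor \frac{a+\lambda}{a+\lambda-1}\right\rfloor_{\lambda} + (a+\lambda)$$
under the assumption $1 < a \leq a + \lambda \leq 2$ and then chain the resulting inequalities along $a \to a+\lambda \to \cdots \to b$.

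For the one-step inequality, write $g(x) = x/(x-1)$ and compute directly
$$g(a) - g(a+\lambda) \;=\; \frac{a}{a-1} - \frac{a+\lambda}{a+\lambda-1} \;=\; \frac{\lambda}{(a-1)(a+\lambda-1)}.$$
Since $1 < a \leq a + \lambda \leq 2$, both factors in the denominator lie in $(0, 1]$, so $(a-1)(a+\lambda-1) \leq 1$ and therefore $g(a) \geq g(a+\lambda) + \lambda$. Next, $\lfloor \cdot \rfloor_{\lambda}$ is non-decreasing and satisfies the shift identity $\lfloor t + \lambda \rfloor_{\lambda} = \lfloor t \rfloor_{\lambda} + \lambda$ for all real $t$, so
$$\left\lfloor g(a)\right\rfloor_{\lambda} \;\geq\; \left\lfloor g(a+\lambda) + \lambda \right\rfloor_{\lambda} \;=\; \left\lfloor g(a+\lambda)\right\rfloor_{\lambda} + \lambda,$$
which rearranges into the one-step claim.

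I do not expect a serious obstacle. The content of the lemma is the elementary bound $(a-1)(a+\lambda-1) \leq 1$ on the unit-length window $(1, 2]$, which says that the derivative of $g(x) = x/(x-1)$ has magnitude at least $1$ there; hence each $\lambda$-shift in the argument decreases $g$ by at least $\lambda$, and the drop in the floor term exactly compensates for the additive loss in passing from $a$ to $a+\lambda$. The only point requiring care is the $\lambda$-floor bookkeeping, which is handled cleanly by the shift identity above.
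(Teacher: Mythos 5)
Your proof is correct and follows essentially the same approach as the paper: reduce to the one-step case $b = a+\lambda$, compute the exact difference $\frac{a}{a-1} - \frac{a+\lambda}{a+\lambda-1} = \frac{\lambda}{(a-1)(a+\lambda-1)} \geq \lambda$, and transfer this to the $\lambda$-floors via monotonicity and the shift identity. The only difference is that you spell out the telescoping step that the paper leaves implicit.
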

\begin{proof}
Write $a=n\lambda$ and $b=(n+m)\lambda$ where $n,m\in \mathbb Z$. Since $1<a\leq b\leq 2$, we have $1<n\lambda\leq (n+m)\lambda\leq 2.$ So we have
$$\frac{n\lambda}{n\lambda-1}-\frac{(n+m)\lambda}{(n+m)\lambda-1}=\frac{m\lambda}{(n\lambda-1)((n+1)\lambda-1)}\geq m\lambda.$$
Hence $$\left\lfloor\frac{n\lambda}{n\lambda-1}\right\rfloor_{\lambda}\geq \left\lfloor\frac{(n+m)\lambda}{(n+m)\lambda-1}\right\rfloor_{\lambda}+m\lambda,$$
which implies that
$$\left\lfloor\frac{n\lambda}{n\lambda-1}\right\rfloor_{\lambda}+n\lambda\geq \left\lfloor\frac{(n+m)\lambda}{(n+m)\lambda-1}\right\rfloor_{\lambda}+(n+m)\lambda.$$
\end{proof}

\begin{lem}\label{lem4.2}
Let $\bm{a},\bm{b}\in \mathbb R_{\geq 0}^2$ such that

{\rm(1)} there is $\gamma \in \mathbb R_{>0}$  such that $1+\gamma\leq \bm{a}_y\leq 2$,

{\rm(2)} $\bm{a}_x<\bm{b}_x$ and $\bm{a}_y>\bm{b}_y$,

{\rm(3)} $\bm{1} \in \overline{\bm{ab}}\cup \overline{\bm{ab}}^{+}$,

{\rm(4)} there is $\lambda\in \mathbb R_{>0}$ such that $\bm{a}-\bm{b}\in (\lambda\mathbb Z)^2$.\\
Then $$\frac{\bm{a}_y-\bm{b}_y+\bm{b}_x-\bm{a}_x}{\lambda}\leq  \left\lfloor \frac{\gamma+1}{\lambda \gamma}\right\rfloor+\left\lceil \frac{\gamma+1}{\lambda}\right\rceil.$$
\end{lem}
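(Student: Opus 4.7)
Set $u := \bm{a}_y - \bm{b}_y$ and $v := \bm{b}_x - \bm{a}_x$; both are positive by (2) and both lie in $\lambda\mathbb{Z}$ by (4), so $A := u/\lambda$ and $B := v/\lambda$ are positive integers whose sum I will bound. (I read the left-hand side as $(u+v)/\lambda = A + B$, matching the vector $\bm{p}=(c_y-d_y,d_x-c_x)$ appearing in Lemma~\ref{lem3.4}.) The plan is to derive a real-valued upper bound for $A+B$, maximise it over $\bm{a}_y$, then round to integers.

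First I would encode condition (3). The line $\overline{\bm{ab}}$ has slope $-u/v$, so $\bm{1}\in\overline{\bm{ab}}\cup\overline{\bm{ab}}^{+}$ is equivalent to saying that the $y$-coordinate of $\overline{\bm{ab}}$ at $x=1$ is at most $1$, which rearranges to $(u/v)(1-\bm{a}_x)\geq\bm{a}_y-1\geq\gamma>0$. In particular this forces $\bm{a}_x<1$, and combined with $\bm{a}_x\geq 0$ it yields the key inequality $v(\bm{a}_y-1)\leq u$, i.e. $B\leq A/(\bm{a}_y-1)$. The constraint $\bm{b}_y\geq 0$ gives $A\lambda=u\leq\bm{a}_y$, hence $A\leq\bm{a}_y/\lambda$. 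Combining,
\[
A+B \;\leq\; A\cdot\frac{\bm{a}_y}{\bm{a}_y-1} \;\leq\; \frac{\bm{a}_y^{2}}{\lambda(\bm{a}_y-1)}.
\]

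Next I would optimise in $\bm{a}_y$. Set $g(t)=(1+t)^{2}/(\lambda t)$ with $t=\bm{a}_y-1\in[\gamma,1]$ (if $\gamma>1$ hypothesis (1) is vacuous). A one-line derivative computation gives $g'(t)=(t^{2}-1)/(\lambda t^{2})\leq 0$ on $(0,1]$, so $g$ is decreasing and attains its maximum at $t=\gamma$:
\[
A+B \;\leq\; \frac{(1+\gamma)^{2}}{\lambda\gamma} \;=\; \frac{\gamma+1}{\lambda} + \frac{\gamma+1}{\lambda\gamma}.
\]
Since $A+B$ is an integer, this upgrades to $A+B\leq\bigl\lfloor(\gamma+1)/\lambda+(\gamma+1)/(\lambda\gamma)\bigr\rfloor$, and the elementary inequality $\lfloor x+y\rfloor\leq\lceil x\rceil+\lfloor y\rfloor$ (immediate from a case split on whether $x\in\mathbb{Z}$) delivers the stated right-hand side.

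The main obstacle is the last step: the real maximum $(1+\gamma)^{2}/(\lambda\gamma)$ collapses to the stated integer expression only via the specific asymmetric bound $\lfloor x+y\rfloor\leq\lceil x\rceil+\lfloor y\rfloor$, which is why the claimed upper bound has one ceiling and one floor. An alternative route, more faithful to Lemma~\ref{lem4.1}, would split on whether $A\lambda\leq 1+\gamma$ (directly apply $\bm{a}_y-1\geq\gamma$ and use integrality of $A,B$) or $A\lambda>1+\gamma$ (put $a:=A\lambda$, use $B\leq\lfloor a/(a-1)\rfloor_{\lambda}/\lambda$, and invoke Lemma~\ref{lem4.1} to reduce to the smallest admissible $a$ in $\lambda\mathbb{Z}\cap(1+\gamma,2]$); this is presumably the author's intended approach and explains why Lemma~\ref{lem4.1} is proved first.
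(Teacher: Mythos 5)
Your proof is correct and takes a genuinely different route from the paper's. You derive the uniform real-valued bound $A+B\leq \bm{a}_y^2/(\lambda(\bm{a}_y-1))$, maximise the right-hand side over the interval $\bm{a}_y\in[1+\gamma,2]$ to get $(1+\gamma)^2/(\lambda\gamma)$, and then round once at the very end using the elementary inequality $\lfloor x+y\rfloor\leq\lceil x\rceil+\lfloor y\rfloor$. The paper instead keeps the $\lambda$-integrality information live throughout (via the $\lfloor\cdot\rfloor_\lambda$ notation), splits into the two cases $\bm{a}_y-\bm{b}_y\leq\gamma+1$ and $\bm{a}_y-\bm{b}_y>\gamma+1$, and in the second case invokes Lemma~\ref{lem4.1} to replace $\bm{a}_y-\bm{b}_y$ by the minimal admissible value $l=\lceil\gamma+1\rceil_\lambda$. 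Your version is cleaner in that it bypasses Lemma~\ref{lem4.1} entirely and has no case split; as a bonus your intermediate bound $A+B\leq\lfloor(1+\gamma)^2/(\lambda\gamma)\rfloor$ is never worse than the stated one, since $\lfloor x+y\rfloor\leq\lceil x\rceil+\lfloor y\rfloor$. Two small points: you rightly read the left-hand side as $(\bm{a}_y-\bm{b}_y)+(\bm{b}_x-\bm{a}_x)$ divided by $\lambda$ (the sign in the lemma statement is a typo, as the paper's own proof confirms); and your derivative computation $g'(t)=(t^2-1)/(\lambda t^2)$ is correct, so the monotonicity step is sound.
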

\begin{proof}
Since $\bm{1} \in \overline{\bm{ab}}\cup \overline{\bm{ab}}^{+}$, we have 
$$\bm{b}_x-\bm{a}_x\leq \frac{(1-\bm{a}_x)(\bm{a}_y-\bm{b}_y)}{\bm{a}_y-1}.$$
Note that $\bm{b}_x-\bm{a}_x\in \lambda \mathbb Z$ and $\bm{a}_y\geq \gamma+1$, we have 
\begin{equation}\label{key}
\begin{aligned}
\bm{b}_x-\bm{a}_x&\leq \left\lfloor \frac{(1-\bm{a}_x)(\bm{a}_y-\bm{b}_y)}{\bm{a}_y-1} \right\rfloor_{\lambda}\\
&\leq \left\lfloor \frac{\bm{a}_y}{\bm{a}_y-1} \right\rfloor_{\lambda}\\
&\leq \left\lfloor \frac{\gamma+1}{\gamma} \right\rfloor _{\lambda}.
\end{aligned}
\end{equation}

If $\bm{a}_y-\bm{b}_y\leq \gamma+1$, then 
\begin{align*}
\frac{\bm{a}_y-\bm{b}_y+\bm{b}_x-\bm{a}_x}{\lambda} &\leq \frac{1}{\lambda}\left(\gamma+1+\left\lfloor \frac{\gamma+1}{\gamma} \right\rfloor _{\lambda}\right)\\
& \leq \left\lceil \frac{\gamma+1}{\lambda}\right\rceil+\left\lfloor \frac{\gamma+1}{\lambda\gamma} \right\rfloor.
\end{align*}

If $\bm{a}_y-\bm{b}_y>\gamma+1$, let $l=\lceil \gamma+1\rceil_{\lambda}$, then $l\leq \bm{a}_y-\bm{b}_y$ since $\bm{a}_y-\bm{b}_y\in \lambda\mathbb Z$. Apply Lemma \ref{lem4.1} to conclude that 
\begin{align}\label{eq421}
\left\lfloor\frac{\bm{a}_y-\bm{b}_y}{\bm{a}_y-\bm{b}_y-1} \right\rfloor_{\lambda}+\bm{a}_y-\bm{b}_y\leq \left\lfloor \frac{l}{l-1}\right\rfloor_{\lambda} +l.
\end{align}
It follows from the first inequality in (\ref{key}) that
\begin{align}\label{eq422}
\bm{b}_x-\bm{a}_x\leq \left\lfloor \frac{\bm{a}_y-\bm{b}_y}{\bm{a}_y-\bm{b}_y-1} \right\rfloor_{\lambda}.
\end{align}
Finally, (\ref{eq421}), (\ref{eq422}) and the fact that $l=\lceil \gamma+1\rceil_{\lambda}\geq \gamma+1$ imply
\begin{align*}
\frac{1}{\lambda}(\bm{a}_y-\bm{b}_y+\bm{b}_x-\bm{a}_x) 
&\leq \frac{1}{\lambda}\left(\left\lfloor\frac{\bm{a}_y-\bm{b}_y}{\bm{a}_y-\bm{b}_y-1} \right\rfloor_{\lambda}+\bm{a}_y-\bm{b}_y\right)\\
&\leq \frac{1}{\lambda}\left(\left\lfloor \frac{l}{l-1}\right\rfloor_{\lambda} +l\right)\\
&\leq \frac{1}{\lambda}\left(\left\lfloor \frac{\gamma+1}{\gamma}\right\rfloor_{\lambda} +\lceil \gamma+1\rceil_{\lambda}\right)\\
&= \left\lfloor \frac{\gamma+1}{\lambda\gamma}\right\rfloor +\left\lceil \frac{\gamma+1}{\lambda}\right\rceil.
\end{align*}
\end{proof}

\begin{lem}\label{lem4.3}
Let $\bm{a},\bm{b},\bm{c}\in \mathbb R_{\geq 0}^2$. Suppose $\bm{a}_y>1$ and $\bm{a}\in \bigtriangleup$, where 
$$\bigtriangleup:=\{(x,y)\in\mathbb{R}_{\geq 0}^2 \mid y\leq 2-x\}.$$
Then
\begin{itemize}
\item[(1)] if $\bm{b}\notin \bigtriangleup$, $\bm{b}_x>\bm{a}_x$ and $1\leq \bm{b}_y < \bm{a}_y$, then $\bm{1}\in \overline{\bm{ab}}^-$;
\item[(2)] if $\bm{c}\notin \bigtriangleup$, $\bm{c}_x<\bm{a}_x$ and $\bm{c}_y > \bm{a}_y$, then $\bm{1}\in \overline{\bm{ca}}^+$.
\end{itemize}
\end{lem}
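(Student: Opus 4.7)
The plan is to introduce the auxiliary line $M: y = 2-x$, which passes through $\bm{1} = (1,1)$ with slope $-1$. The hypothesis $\bm{a}\in \bigtriangleup$ says exactly that $\bm{a}$ lies on or below $M$ in the first quadrant, while $\bm{b},\bm{c}\notin \bigtriangleup$ puts them strictly above $M$. In either part I will bound the slope of the relevant line against $-1$, locate its crossing point $\bm{p}$ with $M$, and then read off the position of $\bm{1}$ from a short identity.

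For part (1), let $m$ be the slope of $\overline{\bm{ab}}$. I would first show $m > -1$: the inequality $\bm{b}_x + \bm{b}_y > 2 \geq \bm{a}_x + \bm{a}_y$ combined with $\bm{b}_x > \bm{a}_x$ rearranges to $(\bm{b}_y - \bm{a}_y)/(\bm{b}_x - \bm{a}_x) > -1$. Next, since $\bm{a}$ and $\bm{b}$ lie on opposite sides of $M$ (weakly for $\bm{a}$, strictly for $\bm{b}$), the segment $\overline{\bm{ab}}$ meets $M$ at a point $\bm{p} = (1-t)\bm{a} + t\bm{b}$ with $t \in [0,1)$; from $\bm{a}_y > 1$ and $\bm{b}_y \geq 1$ the $y$-coordinate satisfies $\bm{p}_y = (1-t)\bm{a}_y + t\bm{b}_y > 1$, so $\bm{p}_x = 2 - \bm{p}_y < 1$. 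Finally, evaluating the line $\overline{\bm{ab}}$ at $x=1$ gives
$$\bm{p}_y + m(1 - \bm{p}_x) = (2 - \bm{p}_x) + m(1 - \bm{p}_x) = 1 + (1 + m)(1 - \bm{p}_x) > 1,$$
so $\bm{1}$ sits strictly below $\overline{\bm{ab}}$, i.e., $\bm{1} \in \overline{\bm{ab}}^-$.

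Part (2) should follow by exactly the same recipe. The slope $m$ of $\overline{\bm{ca}}$ now satisfies $m < -1$, from $\bm{c}_x + \bm{c}_y > 2 \geq \bm{a}_x + \bm{a}_y$ together with $\bm{c}_x < \bm{a}_x$; the crossing point $\bm{p}$ of $\overline{\bm{ca}}$ with $M$ still has $\bm{p}_x < 1$ because $\bm{a}_y, \bm{c}_y > 1$ forces every convex combination of the two to exceed $1$; and the identity $y(1) - 1 = (1 + m)(1 - \bm{p}_x)$ now has a negative first factor, yielding $y(1) < 1$ and thus $\bm{1} \in \overline{\bm{ca}}^+$.

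The hard part, if any, is just bookkeeping the degenerate cases where $\bm{a}$ itself lies on $M$ (so the crossing point coincides with $\bm{a}$ and $t = 0$) or $\bm{b}_y = 1$; but the strict inequality $\bm{a}_y > 1$ keeps the slope comparison and the bound $\bm{p}_x < 1$ intact in every sub-case, so the argument should go through uniformly.
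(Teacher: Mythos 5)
Your proof is correct and fills in the details the paper glosses over: the paper's entire proof of this lemma is the single sentence ``It's not hard to check by plotting the graph,'' so there is no stated argument to compare against. Your approach --- comparing the slope of $\overline{\bm{ab}}$ (resp.\ $\overline{\bm{ca}}$) with $-1$, locating the crossing with the line $y=2-x$, and reading off the sign of $y(1)-1 = (1+m)(1-\bm{p}_x)$ --- is exactly the kind of elementary coordinate computation the authors had in mind, and all the inequalities check out: in (1) one gets $1+m>0$ and $1-\bm{p}_x>0$ so $\bm{1}$ lies strictly below the line, while in (2) one gets $1+m<0$ and $1-\bm{p}_x>0$ so $\bm{1}$ lies strictly above, matching the paper's sign convention for $L^{\pm}$. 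The one place worth stating more carefully is the degenerate case $\bm{a}\in M$ (i.e.\ $t=0$): there $\bm{p}=\bm{a}$, and the bound $\bm{p}_y>1$ comes directly from $\bm{a}_y>1$ rather than from a two-sided convex combination; you flag this at the end, and it indeed causes no trouble.
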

\begin{proof}
It is easy to check by plotting the graph.
\end{proof}

\begin{proof}[Proof of Theorem \ref{thm1.3} (1)]
By the argument in Section 2, we may suppose that $A=\mathbb A_k^2$ and $\mathfrak a=\prod_{j=1}^r \mathfrak{a}_j^{\lambda_j}$ is a monomial $\mathbb R$-ideal on $A$. For short we denote the Newton polytope $\Gamma(\mathfrak a)$ by $\Gamma$. Since {\rm{mld}}$(0;A,\mathfrak{a})\geq 0$, by Lemma \ref{lem3.4} we have $\bm{1}\in \Gamma$, which implies that no vertices of $\Gamma$ locate in $(1,+\infty)\times(1,+\infty)$.  Let $\bm{a}_1,\cdots,\bm{a}_{n+m+t}$ be vertices of $\Gamma$ such that  $\bm{a}_1,\cdots,\bm{a}_{n}\in [0,1]\times(1,+\infty)$, $\bm{a}_{n+1},\cdots,\bm{a}_{n+m}\in [0,1]\times[0,1]$, $\bm{a}_{n+m+1},\cdots, \bm{a}_{n+m+t} \in (1,+\infty)\times [0,1]$ and $(\bm{a}_1)_y>\cdots>(\bm{a}_{n+m+t})_y$. 
Then 
$$\langle \bm{q},\bm{1}\rangle-\langle \bm{q},\Gamma\rangle=\max_{1\leq i\leq n+m+t}\{\langle \bm{q},\bm{1}-\bm{a}_i\rangle\}\quad \text{ for any } \bm{q}\in \mathbb N^2.$$
We denote
$$\bm{a}_0:=((\bm{a}_1)_x,+\infty) \quad \text{and}\quad 
\bm{a}_{n+m+t+1}:=(+\infty,(\bm{a}_{n+m+t})_y).$$
Define $$\bm{b}_i:=\Big((\bm{a}_i)_y-(\bm{a}_{i+1})_y,(\bm{a}_{i+1})_x-(\bm{a}_i)_x\Big)$$ for $i=0,\cdots,n+m+t$, then
$$(\bm{b}_{n+m+t})_y/(\bm{b}_{n+m+t})_x>\cdots>(\bm{b}_0)_y/(\bm{b}_0)_x.$$ 
Note that $(\bm{b}_{n+m+t})_y/(\bm{b}_{n+m+t})_x=+\infty$ and $(\bm{b}_{0})_y/(\bm{b}_{0})_x=0$. It is easy to see that if $\bm{q}\in \mathbb N^2$ satisfies 
$$(\bm{b}_{i-1})_y/(\bm{b}_{i-1})_x\leq\bm{q}_y/\bm{q}_x\leq(\bm{b}_i)_y/(\bm{b}_i)_x$$
for some $i=1,\cdots,n+m+t$, then $\langle \bm{q},\bm{1}\rangle-\langle \bm{q},\Gamma\rangle=\langle \bm{q},\bm{1}-\bm{a}_i\rangle$.

There are following two cases:

(1)  $(\bm{b}_n)_y/(\bm{b}_n)_x < 1 <  (\bm{b}_{n+m})_y/(\bm{b}_{n+m})_x$. Then $m>0$ and there exists $i_0 \in \{n+1, \cdots,n+m\} $ such that $\langle\bm{1},\bm{1}\rangle-\langle\bm{1},\Gamma\rangle=\langle\bm{1},\bm{1}-\bm{a}_{i_0}\rangle$. For any $\bm{q}\in \mathbb{N}^2$, since $\bm{1}-\bm{a}_{i_0}\in [0,1]\times[0,1]$, we have 
$$\langle\bm{q},\bm{1}\rangle-\langle\bm{q},\Gamma\rangle\geq \langle\bm{q},\bm{1}-\bm{a}_{i_0}\rangle \geq \langle\bm{1},\bm{1}-\bm{a}_{i_0}\rangle=\langle\bm{1},\bm{1}\rangle-\langle\bm{1},\Gamma\rangle.$$ 
Therefore $E_{\bm{1}}$ with log discrepancy 2 computes $\text{mld}(0,A,\mathfrak a)$ and the proof is completed.

(2)  $(\bm{b}_n)_y/(\bm{b}_n)_x\geq 1$ or $(\bm{b}_{n+m})_y/(\bm{b}_{n+m})_x\leq 1$. We may suppose the former holds (if not we can replace $\Gamma$ by its reflection along the diagonal). Then $n>0$ (since $(\bm{b}_0)_y/(\bm{b}_0)_x$=0) and
\begin{align}\label{eq131}
(\bm{a}_{n+1})_x-(\bm{a}_{n})_x\geq (\bm{a}_{n})_y-(\bm{a}_{n+1})_y.
\end{align}
It follows from $\bm{1}\in \Gamma$ that 
\begin{align}\label{eq132}
\bm{1}\in\overline{\bm{a}_n\bm{a}_{n+1}}^{+}\cup \overline{\bm{a}_n\bm{a}_{n+1}}.
\end{align}
Hence $\bm{a}_{n+1}\neq (+\infty,(\bm{a}_n)_y)$ since $(\bm{a}_n)_y>1$. 
So $m+t>0$ and we do not need to worry about that $\bm{a}_n$ or $\bm{a}_{n+1}$ is an infinite point.

Apply Lemma \ref{lem3.2} to obtain that $(\bm{a}_n)_y>1+\gamma$. It follows from (\ref{eq131}) and (\ref{eq132}) that $(\bm{a}_n)_y\leq 2$.  By Lemma \ref{lem3.1}, there exists $j=1,\cdots,r$ and $0<\alpha\leq 1$ such that $\alpha\bm{b}_{n}/\lambda_j \in \mathbb N^2$. Denote $\alpha\bm{b}_{n}/\lambda_j$ by $\bm{b}'$. We apply Lemma \ref{lem4.2} to conclude that 
\begin{align*}
 \bm{b}'_x+\bm{b}'_y &\leq \left\lfloor \frac{\gamma+1}{\lambda_j \gamma}\right\rfloor+\left\lceil \frac{\gamma+1}{\lambda_j}\right\rceil\\
 & \leq \left\lfloor \frac{\gamma+1}{e \gamma}\right\rfloor+\left\lceil \frac{\gamma+1}{e}\right\rceil,
\end{align*}
where the second inequality follows from $\lambda_j \geq e$.
Since $\bm{b}'_y/\bm{b}'_x=(\bm{b}_n)_y/(\bm{b}_n)_x$,
\begin{align}\label{last}
\langle\bm{b}',\bm{1}\rangle-\langle\bm{b}',\Gamma\rangle=\langle\bm{b}',\bm{1}-\bm{a}_n\rangle=\langle\bm{b}',\bm{1}-\bm{a}_{n+1}\rangle.
\end{align}

Let $\bm{p}\in \mathbb{N}^2$ such that $E_{\bm{p}}$ computes $\text{mld}(0,\mathbb A,\mathfrak a)$, i.e.,
\begin{align}\label{eq133}
\langle\bm{p},\bm{1}\rangle-\langle\bm{p},\Gamma\rangle=\inf\{\langle\bm{q},\bm{1}\rangle-\langle\bm{q},\Gamma\rangle\mid \bm{q}\in \mathbb N^2\}.
\end{align}
We may suppose that
\begin{align}
\langle\bm{p},\bm{1}\rangle-\langle\bm{p},\Gamma\rangle<\langle\bm{b}',\bm{1}\rangle-\langle\bm{b}',\Gamma\rangle.
\end{align}
Indeed, if not, then $E_{\bm{b}'}$ computes the mld with its log discrepancy satisfying the inequality and hence the proof is completed.

As $k_{E_{\bm{p}}}+1=\bm{p}_x+\bm{p}_y$, it is enough to show that $\bm{p}_x\leq \bm{b}'_x$ and $\bm{p}_y\leq \bm{b}'_y$. 
There are following four subcases:

(2a) $\bm{p}_y/\bm{p}_x\leq \bm{b}'_y/\bm{b}'_x$. Since
\begin{align*}
\langle\bm{p},\bm{1}-\bm{a}_n\rangle\leq \langle\bm{p},\bm{1}\rangle-\langle\bm{p},\Gamma\rangle< \langle\bm{b}',\bm{1}\rangle-\langle\bm{b}',\Gamma\rangle=\langle\bm{b}',\bm{1}-\bm{a}_n\rangle,
\end{align*}
we have
\begin{align}\label{1}
(\bm{p}_x-\bm{b}'_x)(1-(\bm{a}_n)_x)<(\bm{p}_y-\bm{b}'_y)((\bm{a}_n)_y-1).
\end{align}
Note that $1-(\bm{a}_n)_x \geq 0$ and $(\bm{a}_n)_y-1>0$. We claim that $\bm{p}_x\leq \bm{b}'_x$. Indeed, if this is not the case, then (\ref{1}) implies that
$$\frac{1-(\bm{a}_n)_x}{(\bm{a}_n)_y-1}<\frac{\bm{p}_y-\bm{b}'_y}{\bm{p}_x-\bm{b}'_x}\leq \frac{\bm{b}'_y}{\bm{b}'_x},$$
where the last inequality comes from  $\bm{p}_y/\bm{p}_x\leq \bm{b}'_y/\bm{b}'_x$.
However, (\ref{eq132}) implies that
$$\frac{1-(\bm{a}_n)_x}{(\bm{a}_n)_y-1}\geq \frac{(\bm{a}_{n+1})_x-(\bm{a}_n)_x}{(\bm{a}_n)_y-(\bm{a}_{n+1})_y} = \frac{\bm{b}'_y}{\bm{b}'_x},$$
which leads to a contradiction. Therefore, $\bm{p}_x\leq \bm{b}'_x$. Then $\bm{p}_y\leq \bm{b}'_y$ since $\bm{p}_y/\bm{p}_x\leq \bm{b}'_y/\bm{b}'_x$.

(2b) $\bm{p}_y/\bm{p}_x> \bm{b}'_y/\bm{b}'_x$ and $\bm{p}_y\leq \bm{b}'_y$. Then $\bm{p}_x\leq \bm{b}'_x$.

(2c) $\bm{p}_y/\bm{p}_x> \bm{b}'_y/\bm{b}'_x$, $\bm{p}_y> \bm{b}'_y$ and $\bm{p}_x \leq \bm{b}'_x$. Let $\bm{p}'=(\bm{p}_x,\bm{b}'_y)$, then $\bm{p}'_y/\bm{p}'_x\geq \bm{b}'_y/\bm{b}'_x=(\bm{b}_n)_y/(\bm{b}_n)_x$. Therefore there exists $j_0 \in \{n+1, \cdots,n+m+t\} $ such that 
\begin{align}\label{bing1}
\langle\bm{p}',\bm{1}\rangle-\langle\bm{p}',\Gamma\rangle=\langle\bm{p}',\bm{1}-\bm{a}_{j_0}\rangle.
\end{align}
Since $1-(\bm{a}_{j_0})_y\geq 0$, we have
\begin{align}\label{bing2}
\langle\bm{p}',\bm{1}-\bm{a}_{j_0}\rangle\leq \langle\bm{p},\bm{1}-\bm{a}_{j_0}\rangle \leq \langle\bm{p},\bm{1}\rangle-\langle\bm{p},\Gamma\rangle\leq \langle\bm{p}',\bm{1}\rangle-\langle\bm{p}',\Gamma\rangle,
\end{align}
where the last inequality comes from (\ref{eq133}). It follows from \eqref{bing1} and \eqref{bing2} that 
$\langle\bm{p},\bm{1}\rangle-\langle\bm{p},\Gamma\rangle= \langle\bm{p}',\bm{1}\rangle-\langle\bm{p}',\Gamma\rangle$. We therefore obtain a prime divisor $E_{\bm{p}'}$ computing the minimal log discrepancy with its log discrepancy 
$\bm{p}'_x+\bm{p}'_y\leq \bm{b}'_x+\bm{b}'_y$.

(2d) $\bm{p}_y/\bm{p}_x> \bm{b}'_y/\bm{b}'_x$, $\bm{p}_y> \bm{b}'_y$ and $\bm{p}_x > \bm{b}'_x$. Since
$$\langle\bm{p},\bm{1}-\bm{a}_{n+1}\rangle\leq \langle\bm{p},\bm{1}\rangle-\langle\bm{p},\Gamma\rangle< \langle\bm{b}',\bm{1}\rangle-\langle\bm{b}',\Gamma\rangle=\langle\bm{b}',\bm{1}-\bm{a}_{n+1}\rangle,$$
we have
\begin{align}\label{2}
(\bm{p}_x-\bm{b}'_x)((\bm{a}_{n+1})_x-1)>(\bm{p}_y-\bm{b}'_y)(1-(\bm{a}_{n+1})_y).
\end{align}

Note that $1-(\bm{a}_{n+1})_y\geq 0$. If $1-(\bm{a}_{n+1})_y= 0$, then (\ref{eq132}) implies that $(\bm{a}_{n+1})_x\leq 1$. On the other hand, $\bm{p}_x-\bm{b}'_x>0$. This contradicts (\ref{2}).

If $1-(\bm{a}_{n+1})_y>0$, then (\ref{2}) implies that
$$\frac{(\bm{a}_{n+1})_x-1}{1-(\bm{a}_{n+1})_y}> \frac{\bm{p}_y-\bm{b}'_y}{\bm{p}_x-\bm{b}'_x}> \frac{\bm{b}'_y}{\bm{b}'_x},$$
where the last inequality comes from  $\bm{p}_y/\bm{p}_x>\bm{b}'_y/\bm{b}'_x$.
However, (\ref{eq132}) implies that
$$\frac{(\bm{a}_{n+1})_x-1}{1-(\bm{a}_{n+1})_y}\leq \frac{(\bm{a}_{n+1})_x-(\bm{a}_n)_x}{(\bm{a}_n)_y-(\bm{a}_{n+1})_y} = \frac{\bm{b}'_y}{\bm{b}'_x},$$
which leads to a contradiction. 
\end{proof}

\begin{proof}[Proof of Theorem \ref{thm1.3} (2)]
By the argument in Section 2, we may suppose that $A=\mathbb A_k^2$ and $\mathfrak a=\prod_{j=1}^r \mathfrak{a}_j^{\lambda_j}$ is a monomial $\mathbb R$-ideal on $A$. For short we denote the Newton polytope $\Gamma(\mathfrak a)$ by $\Gamma$. Since {\rm{mld}}$(0;A,\mathfrak{a})=-\infty$, by Lemma \ref{lem3.4} we have $\bm{1}\notin \Gamma$. Let $\bm{a}_1,\cdots,\bm{a}_{n}$ be vertices of $\Gamma$ such that $(\bm{a}_1)_y>\cdots>(\bm{a}_{n})_y$. Denote $\bm{a}_0=((\bm{a}_1)_x,+\infty)$ and  $\bm{a}_{n+1}=(+\infty,(\bm{a}_{n})_y)$.

We may suppose that $\Gamma$ is convenient, i.e $\Gamma$ meets both $x$-axis and $y$-axis. Indeed, if this is not the case, replacing each $\mathfrak a_i$ by the ideal generated by $\mathfrak a_i, x^{m}$ and $y^{m}$ for a sufficiently large integer $m$, we obtain a monomial $\mathbb R$-ideal $\widetilde{\mathfrak{a}}$ with  its Newton polytope convenient and containing the original one. Every divisor $E$ with $a(E;A,\widetilde{\mathfrak{a}})<0$ also satisfies $a(E;A,\mathfrak{a})<0$, so we may replace the original monomial $\mathbb R$-ideal $\mathfrak{a}$ by the new one $\widetilde{\mathfrak{a}}$. Hence we may suppose that $\Gamma$ is convenient.

If $\langle\bm{1},\Gamma\rangle>2$, then $\langle\bm{1},\bm{1}\rangle-\langle\bm{1},\Gamma\rangle<0$, thus $E_{\bm{1}}$ with log discrepancy 2 computes ${\rm{mld}}(0;A,\mathfrak{a})$ and the proof is completed.

If $\langle\bm{1},\Gamma\rangle\leq 2$, there exists $i_0=1,\cdots n$ such that $\bm{a}_{i_0}\in \bigtriangleup$, where 
$$\bigtriangleup=\{(x,y)\in\mathbb{R}_{\geq 0}^2 \mid y\leq 2-x\}.$$ Since $\bm{1}\notin \Gamma$, we have $\bm{a}_{i_0}\notin [0,1]\times[0,1]$. So we may assume that
$(\bm{a}_{i_0})_y>1$
(if not we can replace $\Gamma$ by its reflection along the diagonal). Let
$$j_0:=\max\{i=1,\cdots n \mid \bm{a}_{i}\in \bigtriangleup\},$$
then $j_0\geq i_0$. We claim that $(\bm{a}_{j_0})_y>1$. Indeed, if this is not the case, then
$$\bm{1}\in \{(1-\alpha)\bm{a}_{i_0}+\alpha \bm{a}_{j_0}+\beta \bm{q}\mid 0\leq \alpha \leq 1, \beta\geq 0, \bm{q}=(1,0) \},$$
which follows from $(\bm{a}_{i_0})_y>1, (\bm{a}_{j_0})_y\leq 1$ and $\bm{a}_{i_0},\bm{a}_{j_0}\in \bigtriangleup$. 
This contradicts the fact $\bm{1}\notin \Gamma$ and hence the claim $(\bm{a}_{j_0})_y>1$ holds. Since  $(\bm{a}_{n})_y=0$ (because $\Gamma$ is convenient), we have $j_0<n$. 

By the definition of $j_0$, we have $\bm{a}_{j_0+1}\notin \bigtriangleup$. We claim that $\bm{1}\in \overline{\bm{a}_{j_0}\bm{a}_{j_0+1}}^-$. In fact, if not, by Lemma \ref{lem4.3} (1) we have $(\bm{a}_{j_0+1})_y<1$. Then
$$\bm{1}\in \{(1-\alpha)\bm{a}_{j_0}+\alpha \bm{a}_{j_0+1}+\beta \bm{q}\mid 0\leq \alpha \leq 1, \beta\geq 0, \bm{q}=(1,0) \},$$ which follows from $(\bm{a}_{j_0})_y>1, (\bm{a}_{j_0+1})_y<1$ and $\bm{1}\in \overline{\bm{a}_{j_0}\bm{a}_{j_0+1}}^+\cup \overline{\bm{a}_{j_0}\bm{a}_{j_0+1}}$. This contradicts the fact $\bm{1}\notin \Gamma$ and hence the claim $\bm{1}\in \overline{\bm{a}_{j_0}\bm{a}_{j_0+1}}^-$ holds. 

Let $$l_0:=\min\{i=1,\cdots,n-1 \mid \bm{1}\in \overline{\bm{a}_i\bm{a}_{i+1}}^{-}\}.$$
Then $l_0\leq j_0$. We claim that $\bm{a}_{l_0}\in \bigtriangleup$. In fact, if not, then $l_0<j_0$ and by Lemma \ref{lem4.3} (2) we have $\bm{1}\in \overline{\bm{a}_{l_0}\bm{a}_{j_0}}^+$, which yields $\bm{1}\in \overline{\bm{a}_{l_0}\bm{a}_{l_0+1}}^+$ and leads to a contradiction with the definition of $l_0$.

For short we denote $\bm{a}_{l_0}$ (resp. $\bm{a}_{l_0+1}$) by $\bm{a}$ (resp. $\bm{b}$). Then $\bm{a}\in \bigtriangleup$, $\bm{1}\in \overline{\bm{ab}}^-$ and $ \bm{a}_y=(\bm{a}_{l_0})_y\geq (\bm{a}_{j_0})_y>1$. Apply Lemma \ref{lem3.2} to obtain that $\bm{a}_y\geq 1+\gamma$. By Lemma \ref{lem3.1}, there exists $0<\alpha\leq 1$ and $j=1,\cdots,r$ such that $(\bm{a}-\bm{c})\in (\lambda_j\mathbb Z)^2$ where $\bm{c}=(1-\alpha) \bm{a}+\alpha \bm{b}$. For short we denote $\lambda_j$ by $\lambda$, then $\lambda \geq e$. 
Let $$t:=\max\{t'\in \mathbb Z\mid \bm{1}\in \overline{\bm{ac'}}^{-}, \text{ where } \bm{c'}=(\bm{c}_x-n'\lambda,\bm{c}_y)\}$$
and let $\bm{d}=(\bm{c}_x-t\lambda,\bm{c}_y).$ 
Then $\bm{1}\in \overline{\bm{ad}}^-$ and $\bm{b}\in \overline{\bm{ad}}\cup \overline{\bm{ad}}^+$. 

Let $\bm{f}=\bm{a}_{l_0-1}$ (recall that $\bm{a}=\bm{a}_{l_0}$). If $l_0=1$, then $\bm{f}=(\bm{a}_x,+\infty)$, hence $\bm{f}\in \overline{\bm{ad}}^+$. If $l_0>1$, by the definition of $l_0$, we have $\bm{1}\notin \overline{\bm{fa}}^-$ while  $\bm{1}\in \overline{\bm{ad}}^-$. It follows that $\bm{f}\in \overline{\bm{ad}}^+$. 

Since $\bm{f}=\bm{a}_{l_0-1}\in \overline{\bm{ad}}^+, \bm{b}=\bm{a}_{l_0+1} \in \overline{\bm{ad}}\cup \overline{\bm{ad}}^+$ and $\bm{a}=\bm{a}_{l_0}$, we have $\Gamma \subseteq \overline{\bm{ad}}\cup\overline{\bm{ad}}^+$. On the other hand, $\bm{1}\in \overline{\bm{ad}}^-$. This implies that $\langle\bm{p},\bm{1}\rangle-\langle\bm{p},\Gamma\rangle<0$ where 
$$\bm{p}=\frac{(\bm{a}_y-\bm{d}_y,\bm{d}_x-\bm{a}_x)}{\lambda}\in \mathbb N^2.$$
Let $\widetilde{\bm{p}}=\bm{p}/\gcd(\bm{p}_x,\bm{p}_y)$. Then $E_{\widetilde{\bm{p}}}$ computes ${\rm{mld}}(0;A,\mathfrak{a})$ with log discrepancy $\leq\bm{p}_x+\bm{p}_y$.

Let $\bm{d}'=(\bm{d}_x-\lambda,\bm{d}_y)$. If $\bm{d}'_x \leq \bm{a}_x$, then $\bm{d}_x-\bm{a}_x\leq \lambda$. It follows from $\bm{a}\in \bigtriangleup$ and $\bm{1}\in \overline{\bm{ad}}^-$ that the slope of $\overline{\bm{ad}}>-1$. So $\bm{a}_y-\bm{d}_y< \bm{d}_x-\bm{a}_x$. Therefore, 
$$\bm{p}_x+\bm{p}_y=\frac{\bm{d}_x-\bm{a}_x+\bm{a}_y-\bm{d}_y}{\lambda}\leq 2$$
and 
the proof is completed.

If $\bm{d}'_x > \bm{a}_x$, by the definition of $\bm{d}$, we have $\bm{1}\in \overline{\bm{ad}'}\cup  \overline{\bm{ad}'}^{+}$. Applying Lemma \ref{lem4.2} we obtain
$$\frac{\bm{a}_y-\bm{d}'_y+\bm{d}'_x-\bm{a}_x}{\lambda}\leq \left\lfloor \frac{\gamma+1}{\lambda\gamma}\right\rfloor+ \left\lceil\frac{1+\gamma}{\lambda}\right \rceil,$$
which implies that
\begin{align*}
\bm{p}_x+\bm{p}_y &= \frac{\bm{a}_y-\bm{d}_y+\bm{d}_x-\bm{a}_x}{\lambda}\\
&\leq \left\lfloor \frac{\gamma+1}{\lambda\gamma}\right\rfloor +\left\lceil \frac{\gamma+1}{\lambda}\right\rceil+1\\
&\leq \left\lfloor \frac{\gamma+1}{e\gamma}\right\rfloor +\left\lceil \frac{\gamma+1}{e}\right\rceil+1
\end{align*}
and completes the proof.
\end{proof}

\section{Proofs of examples}
\begin{lem}\label{toric}
Let $\mathfrak a$ be a monomial $\mathbb R$-ideal on $\mathbb A_k^2$ which supports on the origin (see Definition \ref{d1} for the definition of the support of $\mathfrak a$). Suppose that there exists a positive number $\ell$ such that any toric prime divisor that computes  $\rm{mld}(0;\mathbb A^2_k,\mathfrak{a})$ satisfies that its log discrepancy $\geq 
\ell$. Then the same conclusion holds for all prime divisors that compute the mld.
\end{lem}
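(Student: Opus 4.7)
The plan is to dominate every prime divisor computing the mld by a toric one with no larger log discrepancy. Concretely, for each prime divisor $E$ over $\mathbb{A}^2_k$ with center $0$, I set $\bm{p}:=({\rm val}_E(x),{\rm val}_E(y))$, which lies in $\mathbb{N}^2$ because the center of $E$ being the origin forces both coordinates to be positive. I then compare $E$ with the toric divisor $E_{\bm{p}}$ and aim to establish
$$a(E_{\bm{p}};\mathbb{A}^2_k,\mathfrak{a})\leq a(E;\mathbb{A}^2_k,\mathfrak{a})\quad\text{and}\quad k_{E_{\bm{p}}}+1\leq k_E+1,$$
so that whenever $E$ computes the mld, so does $E_{\bm{p}}$, and the hypothesis then gives $k_E+1\geq k_{E_{\bm{p}}}+1\geq \ell$.

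The first and only substantial step is the log-discrepancy inequality $k_E+1\geq {\rm val}_E(x)+{\rm val}_E(y)$. I would prove it by taking a birational morphism $\pi\colon Y\to \mathbb{A}^2_k$ extracting $E$ with local equation $u$ for $E$, writing $\pi^*x=u^pf$ and $\pi^*y=u^qg$ with $f,g$ of generic order $0$ along $E$, and expanding $d(\pi^*x)\wedge d(\pi^*y)=\pi^*(dx\wedge dy)$. The resulting $2$-form contains an explicit factor of $u^{p+q-1}$, so $k_E=\mathrm{ord}_E(\mathrm{Jac}(\pi))\geq p+q-1$. For the toric divisor one has the standard equality $k_{E_{\bm{p}}}+1=\langle\bm{p},\bm{1}\rangle=p+q$.

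Next I exploit that each $\mathfrak{a}_j$ is monomial: any $g\in\mathfrak{a}_j$ is an $\mathcal{O}$-combination of its monomial generators $x^{a_i}y^{b_i}$, so ${\rm val}_E(g)\geq \min_i(a_ip+b_iq)$ with equality achieved on the minimizing monomial. Hence
$${\rm val}_E(\mathfrak{a}_j)=\langle\bm{p},\Gamma(\mathfrak{a}_j)\rangle={\rm val}_{E_{\bm{p}}}(\mathfrak{a}_j).$$
Combined with Step~1 this yields $a(E_{\bm{p}})\leq a(E)$ as required.

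Finally, if $\mathrm{mld}(0;\mathbb{A}^2_k,\mathfrak{a})\geq 0$ and $E$ computes it, then $a(E_{\bm{p}})\leq a(E)=\mathrm{mld}$ together with the trivial lower bound $a(E_{\bm{p}})\geq\mathrm{mld}$ forces equality, so $E_{\bm{p}}$ also computes the mld. If $\mathrm{mld}=-\infty$, then $a(E)<0$ directly yields $a(E_{\bm{p}})<0$, so $E_{\bm{p}}$ again computes the mld. In either case the hypothesis gives $p+q\geq\ell$, so $k_E+1\geq p+q\geq\ell$, as desired. The only nontrivial calculation is the local expansion in Step~1; the remainder is a straightforward comparison that uses crucially that $\mathfrak{a}$ is monomial.
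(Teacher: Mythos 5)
Your proof is correct, and it takes a genuinely different route from the paper's. The paper passes to a toric log resolution $\pi\colon M\to\mathbb{A}^2_k$ of $(\mathbb{A}^2_k,\mathfrak{a})$, rewrites $a(E;\mathbb{A}^2_k,\mathfrak{a})=a(E;M,B)$ with $B$ the induced log-smooth boundary, and then shows (by the two case distinctions $\mathrm{mld}=-\infty$ and $\mathrm{mld}=\epsilon\geq 0$) that the center of any mld-computing $E$ on $M$ must lie on some divisor of $M$ that itself computes the mld; the monotonicity of discrepancy along the tower then finishes. You instead replace $E$ directly by the monomial (toric) valuation $E_{\bm p}$ with $\bm p=(\mathrm{val}_E(x),\mathrm{val}_E(y))$, using two standard comparisons: the Jacobian expansion $\pi^*(dx\wedge dy)=u^{p+q-1}(\cdots)du\wedge dv$ gives $k_E+1\geq p+q=k_{E_{\bm p}}+1$, and monomiality of each $\mathfrak{a}_j$ gives $\mathrm{val}_E(\mathfrak{a}_j)=\langle\bm p,\Gamma(\mathfrak{a}_j)\rangle=\mathrm{val}_{E_{\bm p}}(\mathfrak{a}_j)$, whence $a(E_{\bm p})\leq a(E)$. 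Your approach is more elementary (no resolution needed) and proves the stronger statement that every mld-computing divisor is dominated, in both discrepancy and $a(\cdot)$, by an mld-computing toric one. It is also closer in spirit to the Ishii-style reduction already invoked in Section~2 of the paper; what the paper's resolution argument buys is that it generalizes more readily to settings where a monomial-valuation comparison is unavailable, but for this specific lemma your route is cleaner.
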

\begin{proof}
Take a toric log resolution $\pi: Y \rightarrow \mathbb A_k^2$ of the pair $(\mathbb A_k^2, \mathfrak a)$. Define
$$B:=\sum_F (1-a(F;\mathbb A_k^2,\mathfrak a))F,$$
where $F$ runs all prime divisors on $Y$ whose center on $\mathbb A_k^2$ is 0. Then $B$ is a simple normal crossing divisor on $Y$ and 
\begin{align*}
a(E;\mathbb A_k^2,\mathfrak a)=a(E;Y,B)
\end{align*}
for any prime divisor $E$ over $\mathbb A_k^2$.

Let $E$ be a prime divisor over $\mathbb A^2_k$ with the center 0 that computes the mld. If $E$ is a divisor on $Y$, then $E$ is a toric divisor and the conclusion holds. If $E$ is exceptional over $Y$, we claim that the center of $E$ on $Y$ is contained in some prime divisor on $Y$ that computes the mld. Indeed, if this is not the case, there are the following two cases:

(1) $\rm{mld}(0;\mathbb A^2_k,\mathfrak{a})=-\infty$. Then $\text{center}_Y E$ is not contained in any component of $B$ with coefficient $>1$. Then $a(E;Y,B)=a(E;Y,B_{\leq 1})\geq 0$, where the second inequality follows from the fact that $B$ is simple normal crossing (see \textbf{Notation} (7) in Section 1 for the definition of $B_{\leq 1}$). This contradicts that $E$ computes the mld.

(2) $\rm{mld}(0;\mathbb A^2_k,\mathfrak{a})=\epsilon\geq 0$. Then $\text{center}_Y E$ is not contained in any component of $B$ with coefficient $\geq 1-\epsilon$. Then $a(E;Y,B)=a(E;Y,B_{< 1-\epsilon})>2\epsilon$, where the second inequality follows from the fact that $B$ is simple normal crossing (see \textbf{Notation} (7) in Section 1 for the definition of $B_{< 1-\epsilon}$). This contradicts that $E$ computes the mld.

Therefore the center of $E$ on $Y$ is contained in some prime divisor $F$ on $Y$ that computes the mld. Since $F$ is a toric divisor, its log dicrepancy $\geq \ell$, which implies that the log dicrepancy of $E \geq \ell$.
\end{proof}

\begin{lem}\label{lem5.0}
Fix a positive integer $n\geq 2$. Let $\Gamma$ be the Newton polytope of the monomial $\mathbb R$-ideal
$\mathfrak a=(x^{n^2},y^{n-1})^{e}$ on $\mathbb A_k^2$ where $e=1/(n-1)+1/n^2$. Let $\bm{p}$ be the vector $(n-1,n^2)$. Then 
$\langle\bm{p},\bm{1}\rangle-\langle\bm{p},\Gamma\rangle=0$
and  
\begin{align*}
\langle\bm{q},\bm{1}\rangle-\langle\bm{q},\Gamma\rangle\geq 0 \quad \text{for any $\bm{q}\in \mathbb N^2$}.
\end{align*}
Moreover, if the equality holds, then 
$\langle\bm{q},\bm{1}\rangle\geq \langle\bm{p},\bm{1}\rangle=n^2+n-1.$
\end{lem}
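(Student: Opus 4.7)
The plan is to analyze the Newton polygon $\Gamma$ explicitly. The ideal $\mathfrak{a}_1 = (x^{n^2},y^{n-1})$ has Newton polygon with vertices $(n^2,0)$ and $(0,n-1)$, so after multiplying by the exponent $e = \frac{1}{n-1} + \frac{1}{n^2}$, the polygon $\Gamma$ has a unique compact edge between the vertices $\bm{A} = (0, e(n-1)) = \bigl(0, 1+\tfrac{n-1}{n^2}\bigr)$ and $\bm{B} = (en^2,0) = \bigl(1+\tfrac{n^2}{n-1},\,0\bigr)$. A direct computation shows this edge lies on the line $(n-1)x + n^2 y = n^2+n-1$, and that $\Gamma$ is contained in the half-plane $(n-1)x + n^2 y \geq n^2+n-1$. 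In particular, since $(n-1)\cdot 1 + n^2 \cdot 1 = n^2+n-1$, the point $\bm{1}$ lies on this edge; moreover, it is in the relative interior of $\overline{\bm{AB}}$ because $0 < 1 < 1 + \tfrac{n^2}{n-1}$.

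First I would derive the equality $\langle \bm{p}, \bm{1} \rangle - \langle \bm{p}, \Gamma \rangle = 0$. The value $\langle \bm{p}, \bm{1}\rangle = (n-1) + n^2 = n^2+n-1$ is immediate, while the computation above shows $\bm{p}$ is precisely the inward normal direction to the edge $\overline{\bm{AB}}$, so $\langle \bm{p}, \Gamma \rangle$ is attained along this edge and equals $n^2+n-1$ as well. The general inequality $\langle\bm{q},\bm{1}\rangle - \langle\bm{q},\Gamma\rangle \geq 0$ for $\bm{q}\in\mathbb{N}^2$ is immediate from $\bm{1}\in\Gamma$, since $\langle \bm{q},\Gamma\rangle \leq \bm{q}\cdot\bm{1} = \langle \bm{q},\bm{1}\rangle$.

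The heart of the proof is the moreover part. Equality $\langle\bm{q},\bm{1}\rangle = \langle\bm{q},\Gamma\rangle$ means that $\bm{1}$ is a minimizer of $\bm{x}\mapsto \bm{q}\cdot\bm{x}$ on $\Gamma$, i.e., $\bm{q}$ lies in the (inward) normal cone to $\Gamma$ at $\bm{1}$. Because $\bm{1}$ is in the relative interior of the compact edge $\overline{\bm{AB}}$ (and $\Gamma$ is locally a half-plane along the edge with supporting normal $\bm{p}$), this normal cone is exactly the ray $\mathbb{R}_{\geq 0}\cdot\bm{p} = \mathbb{R}_{\geq 0}\cdot(n-1,n^2)$. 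Thus $\bm{q} = \lambda(n-1,n^2)$ for some $\lambda>0$. Using $n^2 = (n-1)(n+1) + 1$, we get $\gcd(n-1,n^2) = 1$, so $\bm{q}\in\mathbb{N}^2$ forces $\lambda$ to be a positive integer $k$, and hence $\langle \bm{q},\bm{1}\rangle = k(n^2+n-1) \geq n^2+n-1 = \langle\bm{p},\bm{1}\rangle$.

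I do not anticipate a serious obstacle: the argument is an elementary polytope calculation, and the only delicate step is the normal-cone characterization, which just requires confirming that $\bm{1}$ is interior to $\overline{\bm{AB}}$ (not at a vertex) so that the normal cone is one-dimensional rather than a two-dimensional wedge. If $\bm{1}$ were at $\bm{A}$ or $\bm{B}$, other integer directions would also realize $\bm{1}$ as minimizer and the conclusion could fail; the strict inequalities $0 < 1 < 1 + \tfrac{n^2}{n-1}$ rule this out.
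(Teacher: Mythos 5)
Your proof is correct and takes essentially the same approach as the paper: both compute the Newton polygon explicitly, observe that $\bm{1}$ lies on the compact edge $\overline{\bm{AB}}$ with inward normal $\bm{p}=(n-1,n^2)$, reduce the equality case to $\bm{q}$ being proportional to $\bm{p}$, and finish with $\gcd(n-1,n^2)=1$. The only cosmetic difference is that the paper expresses the argument via the explicit convex-combination identity $\bm{q}_x+\bm{q}_y=\tfrac{n-1}{n^2+n-1}(en^2\bm{q}_x)+\tfrac{n^2}{n^2+n-1}(e(n-1)\bm{q}_y)$, while you phrase the same fact geometrically through the normal cone at a relative-interior point of the edge; both give exactly the same characterization of when equality holds.
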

\begin{proof}
By direct calculation, we obtain $\langle\bm{p},\bm{1}\rangle-\langle\bm{p},\Gamma\rangle=0$. For any $\bm{q}\in \mathbb N^2$,
\begin{align*}
\langle\bm{q},\Gamma\rangle&=\min\left\{en^2 \bm{q}_x, e(n-1)\bm{q}_y\right\}.
\end{align*}
Since
$$\langle \bm{q},\bm{1}\rangle=\bm{q}_x+\bm{q}_y=\frac{n-1}{n^2+n-1}(en^2 \bm{q}_x)+\frac{n^2}{n^2+n-1}(e(n-1)\bm{q}_y),$$
we have either $$en^2 \bm{q}_x\leq \langle \bm{q},\bm{1}\rangle\leq e(n-1)\bm{q}_y$$ or $$e(n-1)\bm{q}_y\leq \langle \bm{q},\bm{1}\rangle\leq en^2 \bm{q}_x.$$
Therefore, $\langle\bm{q},\bm{1}\rangle\geq \langle\bm{q},\Gamma\rangle$ and the equality holds if and only if 
$$en^2 \bm{q}_x= \langle \bm{q},\bm{1}\rangle= e(n-1)\bm{q}_y.$$
Note that $n^2$ is coprime with $n-1$. Hence in this case there is a positive integer $a$ such that $\bm{q}=a\bm{p}=(a(n-1),an^2)$, which implies that $\langle\bm{q},\bm{1}\rangle\geq \langle\bm{p},\bm{1}\rangle$.
\end{proof}

\begin{proof}[Proof of Example \ref{e1}]
It follows from Lemma \ref{toric} and Lemma \ref{lem5.0}.
\end{proof}

\begin{lem}\label{lem5.1}
Fix a positive integer $n$. Let $\Gamma$ be the Newton polytope of the monomial $\mathbb R$-ideal
$(x^{n^2+n+1},y^{n+1})^{1/n}$ on $\mathbb A_k^2$ and let $\bm{p}$ be the vector $(n+1,n^2+n+1)$. Then 

\rm{(1)} $\langle\bm{p},\bm{1}\rangle-\langle\bm{p},\Gamma\rangle<0$;

\rm{(2)} $\langle\bm{q},\bm{1}\rangle\geq \langle\bm{p},\bm{1}\rangle=(n+1)^2+1$ for any $\bm{q}\in \mathbb N^2$ such that $\langle\bm{q},\bm{1}\rangle-\langle\bm{q},\Gamma\rangle<0$. 

\end{lem}
\begin{proof}
By direct calculation, we obtain $\langle\bm{p},\bm{1}\rangle-\langle\bm{p},\Gamma\rangle=-1/n$.
Let $\bm{q}\in \mathbb N^2$ such that $\langle\bm{q},\bm{1}\rangle-\langle\bm{q},\Gamma\rangle<0$. Then
$$\langle\bm{q},\Gamma\rangle=\min\left\{\frac{n^2+n+1}{n}\bm{q}_x,\frac{n+1}{n}\bm{q}_y\right\}.$$

If $(n+1)\bm{q}_y\leq(n^2+n+1)\bm{q}_x$, then
\begin{align*}
\bm{q}_y\leq \left(n+\frac{1}{n+1}\right)\bm{q}_x
\end{align*} and 
\begin{align}\label{o1}
\langle\bm{q},\bm{1}\rangle-\langle\bm{q},\Gamma\rangle=\bm{q}_x-\frac{1}{n}\bm{q}_y<0.
\end{align}
Hence $$n\bm{q}_x<\bm{q}_y\leq \left(n+\frac{1}{n+1}\right)\bm{q}_x.$$
Note that $\bm{q}_x$ and $\bm{q}_y$ are integers, we have $\bm{q}_x\geq n+1$. Hence (\ref{o1}) implies that $\bm{q}_y\geq n^2+n+1$. Therefore $\bm{q}_x+\bm{q}_y\geq (n+1)^2+1$.

If $(n+1)\bm{q}_y\geq(n^2+n+1)\bm{q}_x$, then
\begin{align}\label{o2}
\bm{q}_y\geq \left(n+\frac{1}{n+1}\right)\bm{q}_x>n\bm{q}_x
\end{align} and 
\begin{align*}
\langle\bm{q},\bm{1}\rangle-\langle\bm{q},\Gamma\rangle=\bm{q}_y-\left(n+\frac{1}{n}\right)\bm{q}_x<0.
\end{align*}
Hence $$ n\bm{q}_x<\bm{q}_y<\left(n+\frac{1}{n}\right)\bm{q}_x.$$
Note that $\bm{q}_x$ and $\bm{q}_y$ are integers, we have $\bm{q}_x\geq n+1$. Hence (\ref{o2}) implies that $\bm{q}_y\geq n^2+n+1$. Therefore $\bm{q}_x+\bm{q}_y\geq (n+1)^2+1$.
\end{proof}

\begin{proof}[Proof of Example \ref{e2}]
It follows from Lemma \ref{toric} and Lemma \ref{lem5.1}.
\end{proof}


\begin{thebibliography}{AGLV93}

\bibitem[Ale]{Ale} V. Alexeev, \emph{Two dimensional terminations}, Duke Math. J. \textbf{69} (1993), no. 3, 527-545.

\bibitem[CH]{CH} G. Chen and J. Han, \emph{Boundedness of $(\epsilon, n)$-complements for surfaces}, arXiv:2002.02246v2, short version
published on Adv. Math. \textbf{383} (2021), 107703, 40pp.

\bibitem[HL]{HL} J. Han and Y. Luo, \emph{On boundedness of divisors computing minimal log discrepancies for surfaces}, arXiv:2005.09626v3,  to appear in J. Inst. Math. Jussieu.

\bibitem[Ish1]{Ish1} S. Ishii, \emph{Finite determination conjecture for Mather-Jacobian minimal log discrepancies and its applications},
Eur. J. Math. \textbf{4} (2018), 1433-1475.

\bibitem[Ish2]{Ish2} S. Ishii, \emph{The minimal log discrepancies on a smooth surface in positive characteristic}, Math. Zeitschrift \textbf{297} (2021), 389-397.

\bibitem[Ka]{Ka} M. Kawakita, \emph{On equivalent conjectures for minimal log discrepancies on smooth threefolds}, J. Algebraic Geom. \textbf{30} (2021), 97-149.

\bibitem[MN]{MN}
M. Musta\c t\v a and Y. Nakamura, \emph{A boundedness conjecture for minimal log discrepancies on a
fixed germ}, Local and global methods in algebraic geometry, Contemp.
Math. \textbf{712} (2018), 287-306.




\end{thebibliography}
\end{document}